\documentclass[11pt,a4paper]{article}

\usepackage[utf8]{inputenc}
\usepackage[T1]{fontenc}
\usepackage{amsmath, amsthm, amssymb}
\usepackage{geometry}
\usepackage{hyperref}
\usepackage{enumitem}
\usepackage{xcolor}
\usepackage{algorithm}
\usepackage{algpseudocode}
\usepackage{graphicx}
\graphicspath{{figures/}}
\usepackage{subcaption}  

\newtheorem{theorem}{Theorem}[section]
\newtheorem{proposition}[theorem]{Proposition}
\newtheorem{lemma}[theorem]{Lemma}

\theoremstyle{definition}
\newtheorem{definition}[theorem]{Definition}
\theoremstyle{remark}
\newtheorem{remark}[theorem]{Remark}
\theoremstyle{example}

\title{Controlled Swarm Gradient Dynamics}
\author{
    Louison Aubert\thanks{E-mail: \href{mailto:louison.aubert@tse-fr.eu}{louison.aubert@tse-fr.eu}} \\
    \small Toulouse School of Economics, University of Toulouse Capitole, Toulouse, France
}
\date{\today}

\begin{document}

\maketitle

\begin{abstract}
\small
We consider the global optimization of a non-convex potential
\(U : \mathbb{R}^d \to \mathbb{R}\) and extend the controlled simulated annealing
framework introduced in~\cite{controlledSA} to the class of swarm gradient dynamics,
a family of Langevin-type mean-field diffusions whose noise intensity depends locally
on the marginal density of the process.
Building on the time-homogeneous model of~\cite{HuangMalik2025}, we first analyze its
invariant probability density and show that, as the inverse temperature parameter
tends to infinity, it converges weakly to a probability measure supported on the set
of global minimizers of \(U\).
This result justifies using this family of invariant measures as an annealing curve
in a controlled swarm setting.

Given an arbitrary non-decreasing cooling schedule, we then prove the existence of a
velocity field solving the continuity equation associated with the curve of invariant
densities.
Superimposing this field onto the swarm gradient dynamics yields a well-posed controlled
process whose marginal law follows exactly the prescribed annealing curve.
As a consequence, the controlled swarm dynamics converges toward global minimizers
with, in principle, arbitrarily fast convergence rates, entirely dictated by the
choice of the cooling schedule.
Finally, we discuss an algorithmic implementation of the controlled dynamics and
compare its performance with controlled simulated annealing, highlighting some
numerical limitations.
\end{abstract}

\paragraph{Acknowledgements.} 
The author would like to thank Stéphane Villeneuve for his guidance and constant support throughout this research. This work was supported by the grant ANR-17-EURE-0010 (Investissements d’Avenir program).

\section*{Introduction}

Global optimization of non-convex functions remains one of the most fundamental and challenging problems in modern optimization. When the objective function \( U : \mathbb{R}^d \to \mathbb{R} \) admits multiple local minima, classical gradient-based algorithms often fail to locate the global minimum. A popular probabilistic alternative is to recast the optimization task as a sampling problem, where the goal is to generate samples from a probability distribution \(\mu_\infty\) supported on the set of global minimizers of \(U\).

Among sampling-based methods, \emph{simulated annealing}, whose origins can be traced back to \cite{GemanHwang1986, ChiangHwangSheu1987}, stands out as one of the few approaches providing rigorous convergence guarantees.  
It is defined by the time-inhomogeneous Langevin diffusion
\begin{equation}
  \mathrm{d}X_t = -\nabla U(X_t)\,\mathrm{d}t 
  + \sqrt{\tfrac{2}{\beta(t)}}\,\mathrm{d}B_t,
\end{equation}
where \(\beta(t)\) is a non-decreasing function of time, called the \emph{cooling schedule}, and \((B_t)_{t\ge0}\) is a standard Brownian motion.  
Under suitable regularity assumptions on \(U\) and an appropriate choice of \(\beta(t)\), the law of \(X_t\) converges in distribution to the limiting probability measure \(\mu_\infty\) as \(t \to \infty\). Despite these theoretical guarantees, simulated annealing is notoriously slow in practice: its convergence rate is limited by metastability phenomena and cannot exceed a logarithmic speed.

This slow convergence can be understood through the invariant distribution of the Langevin diffusion at fixed inverse temperature, given by the Gibbs measure \(\mu_\beta \propto e^{-\beta U}\), \(\beta > 0\), which converges weakly to \(\mu_\infty\) as \(\beta \to \infty\)~\cite{Hwang1980}.  
In the simulated annealing regime, the process must remain sufficiently close to the evolving Gibbs curve \((\mu_{\beta(t)})_{t \ge 0}\) to guarantee convergence, which imposes severe restrictions on the admissible cooling schedules.

Hence, developing optimization or sampling schemes that retain theoretical guarantees while accelerating convergence remains a central challenge in global optimization.

\paragraph{Objective of the present work.}

A natural idea to accelerate annealing-type dynamics is to make the diffusion coefficient depend on the local density of the process itself. Indeed, the marginal law \(\rho^{X_t}\) implicitly reflects the geometry of the potential \(U\), as it tends to concentrate around local minima, precisely where additional stochasticity is needed to escape energy wells. This intuition motivates the introduction of \emph{Swarm Gradient Dynamics}, a class of McKean--Vlasov-type processes introduced in~\cite{BolteMicloVilleneuve2024}.

A different approach, introduced in~\cite{controlledSA}, consists in \emph{controlling} the simulated annealing dynamics so that its marginal law follows exactly the Gibbs curve \((\mu_{\beta(t)})_{t \ge 0}\). This is achieved by superposing a deterministic vector field onto the Langevin dynamics in order to correct the evolution of the marginal distribution. As a result, convergence is no longer constrained by metastability effects or functional inequalities, and arbitrarily fast cooling schedules become admissible.

In this paper, we propose to study a natural extension of this control strategy to swarm gradient dynamics. Our approach relies on the framework introduced in~\cite{HuangMalik2025}, which considers a particular class of time-homogeneous swarm gradient dynamics admitting an explicit invariant probability density. We show in Section~\ref{sec:limit} that this invariant measure converges weakly, as the inverse temperature parameter tends to infinity, to a probability measure supported on the set of global minimizers of \(U\). This property makes the corresponding swarm gradient dynamics a suitable candidate for extending the control methodology of~\cite{controlledSA}.

\paragraph{Swarm gradient dynamics.}

The \emph{Swarm Gradient Dynamics}, introduced in~\cite{BolteMicloVilleneuve2024}, are McKean--Vlasov-type stochastic processes defined by
\begin{equation}
  \mathrm{d}X_t
  = -\nabla U(X_t)\,\mathrm{d}t
  + \sqrt{\tfrac{2}{\beta(t)}\,\alpha(\rho^{X_t}(X_t))}\,\mathrm{d}B_t,
  \qquad
  X_0 \sim \rho_0,
  \label{eq:swarm_sde}
\end{equation}
where the function \(\alpha\) is derived from a convex function \(\varphi\) through
\[
  \alpha(r) = \frac{1}{r}\int_0^r s\,\varphi''(s)\,\mathrm{d}s,
  \qquad
  \varphi \in C^2((0,+\infty)),
  \quad
  \varphi''(r) > 0, \;
  \varphi'(0) = -\infty.
\]
The associated Fokker--Planck equation reads
\[
  \partial_t \rho_t
  = \nabla \cdot
  \big(
    \rho_t(\nabla U
    + \tfrac{1}{\beta(t)}\nabla \varphi'(\rho_t))
  \big).
\]

This structure combines gradient-based dynamics with local interactions depending on the particle density, hence the terminology \emph{swarm} dynamics.  
These processes are nonstandard McKean--Vlasov SDEs of Nemytskii type, since their coefficients depend locally—rather than globally—on the density. As a consequence, classical well-posedness results for McKean--Vlasov equations do not directly apply, and the analysis of such dynamics through SDE techniques becomes particularly delicate, especially in the time-inhomogeneous setting.

In recent years, optimal transport theory has provided powerful geometric tools to analyze stochastic processes. In particular, the space \(\mathcal{P}_2(\mathbb{R}^d)\) of probability measures with finite second moment, endowed with the 2-Wasserstein distance, allows one to interpret many diffusion equations as \emph{Wasserstein gradient flows} of suitable energy functionals~\cite{ambrosio2008gradient}.  
In this framework, the Fokker--Planck equation associated with the swarm gradient dynamics can be viewed as the time-dependent Wasserstein gradient flow of the functional
\[
  \mathcal{J}_{\beta(t)}(\rho)
  =
  \int_{\mathbb{R}^d}
  \Big(
    U(y)\rho(y)
    + \tfrac{1}{\beta(t)}\,\varphi(\rho(y))
  \Big)\,\mathrm{d}y.
\]
The extension of Wasserstein gradient flow theory to time-dependent functionals has been developed in~\cite{ferreira2015gradientflowstimedependentfunctionals}.

As in classical simulated annealing, where convergence is typically ensured through logarithmic Sobolev inequalities~\cite{HolleyKusuokaStroock1989}, this Wasserstein gradient flow perspective has motivated the study of functional inequalities guaranteeing convergence of swarm gradient dynamics.  In this context, the gradient flow formulation naturally leads to convergence results based on Łojasiewicz-type inequalities. Such an approach was developed in~\cite{BolteMicloVilleneuve2024}, where convergence was established for a restricted class of power-like function $\varphi$ and for processes evolving on one-dimensional compact Riemannian manifolds. Extending these results to higher-dimensional or non-compact settings remains an open problem.

A particular instance of swarm gradient dynamics was studied in~\cite{HuangMalik2025} in the time-homogeneous case. Using Wasserstein gradient flow techniques, the authors proved convergence toward an invariant probability measure admitting an explicit density formula in terms of the Lambert \(W\) function, for coercive potentials \(U\) on \(\mathbb{R}^d\), without compactness or dimensional restrictions.

\paragraph{Controlled stochastic annealing.}

The core idea of~\cite{controlledSA} is to control the simulated annealing dynamics so that its marginal law follows exactly the Gibbs curve. For each fixed time \(t\), the Gibbs measure \(\mu_{\beta(t)}\) satisfies
\[
  \mathcal{L}_t^* \mu_{\beta(t)} = 0,
\]
where \(\mathcal{L}_t\) denotes the infinitesimal generator of the Langevin diffusion with inverse temperature \(\beta(t)\). However, when \(\beta(t)\) varies in time, \(\mu_{\beta(t)}\) does not satisfy the full Fokker--Planck equation,
\[
  \partial_t \mu_{\beta(t)} \neq \mathcal{L}_t^* \mu_{\beta(t)}.
\]
To compensate for this mismatch, a deterministic velocity field \(v_t\) is introduced so as to satisfy the continuity equation
\[
  \partial_t \mu_{\beta(t)} + \nabla \cdot (v_t \mu_{\beta(t)}) = 0.
\]
Superposing this field onto the Langevin dynamics yields a controlled stochastic process whose marginal law exactly follows the prescribed Gibbs curve. Consequently, the convergence rate depends solely on the chosen cooling schedule.

The identification in~\cite{HuangMalik2025} of an invariant probability measure admitting an explicit density \(\rho_\beta\) depending on the inverse temperature parameter \(\beta\) makes it possible to extend this control strategy to swarm gradient dynamics. More precisely, it allows one to search for a time-dependent vector field \(v_t\) satisfying the continuity equation associated with a curve of probability densities \((\rho_{\beta(t)})_{t \ge 0}\), thereby controlling the time-inhomogeneous swarm gradient dynamics. As a result, the controlled process is guaranteed to converge independently of the choice of cooling schedule. Moreover, since the marginal law is explicitly prescribed, the dynamics is no longer nonlinear in its law, and the need to estimate the density is replaced by the estimation of the velocity field \(v_t\).

\paragraph{Related Work}

Stochastic methods for global optimization are classically rooted in Markov chain Monte Carlo techniques, most notably the Metropolis--Hastings algorithm for sampling Gibbs measures~\cite{RobertCasella2004}.
Simulated annealing extends this framework to optimization by introducing a time-dependent temperature schedule: at each iteration, a candidate state is proposed and accepted according to a Gibbs-type probability that decreases over time~\cite{GemanGeman1984}.
Under a logarithmic cooling schedule, classical simulated annealing converges to the set of global minimizers~\cite{HolleyStroock1988}.
Several variants have been proposed to accelerate convergence.
Fast simulated annealing replaces Gaussian proposal kernels by heavy-tailed Cauchy distributions, enabling larger jumps and allowing cooling schedules of order $1/t$~\cite{SzuHartley1987}.
Generalized simulated annealing further unifies these approaches by parametrizing both visiting and acceptance distributions~\cite{TsallisStariolo1996}.
Very fast simulated annealing proposes exponential cooling schedules, although convergence guarantees remain mostly heuristic~\cite{Ingber1989}.

A continuous-time perspective interprets simulated annealing as a Langevin diffusion with time-dependent temperature~\cite{GemanHwang1986,ChiangHwangSheu1987}.
As in the discrete-time setting, convergence in probability toward global minimizers is ensured under logarithmic cooling schedules~\cite{HolleyKusuokaStroock1989,Miclo1992}.
Several extensions modify the underlying dynamics in order to enhance exploration or facilitate numerical implementation.
Piecewise deterministic simulated annealing replaces the diffusion process by a piecewise deterministic Markov process, preserving logarithmic cooling requirements while reducing simulation cost~\cite{Monmarche2014}.
With the same cooling schedule constraint, kinetic annealing~\cite{JournelMonmarche2021} introduces underdamped Langevin dynamics by augmenting the state space with velocity variables, leading to improved performance at fixed temperature.
Another line of work replaces Brownian motion with Lévy processes, yielding Lévy-flight-based simulated annealing schemes that introduce rare but large jumps and allow polynomial cooling schedules~\cite{Pavlyukevich2007a,Pavlyukevich2007b,Pavlyukevich2008}.
However, these processes can be difficult to tune in practice so as to ensure convergence exclusively toward global minimizers rather than local ones.

A modern viewpoint on Langevin dynamics interprets the Fokker--Planck equation as the Wasserstein gradient flow of the Kullback--Leibler divergence~\cite{JordanKinderlehrerOtto1998,ambrosio2008gradient}.
This formulation naturally extends to time-dependent functionals~\cite{ferreira2015gradientflowstimedependentfunctionals}, allowing simulated annealing to be seen as a time-dependent gradient flow.
The efficiency of stochastic optimization methods is closely related to their ability to escape basins of attraction of local minima, a question that also arises in multimodal sampling.
In this context, by studying the gradient flow of the Kullback--Leibler divergence with respect to the Wasserstein--Fisher--Rao metric, \cite{LuLuNolen2019} introduced Langevin dynamics with birth--death mechanisms, yielding Gibbs samplers with exponential convergence rates independent of energy barriers.

As mentioned above, the Wasserstein perspective has also enabled the analysis of swarm gradient methods~\cite{BolteMicloVilleneuve2024,HuangMalik2025}.
Other noise-modulation strategies include so-called fraudulent algorithms, which scale the diffusion coefficient by the value of the potential itself.
Under knowledge of the global minimum value, such processes, on compact Riemannian manifolds of dimension at least two, converge almost surely to global minimizers~\cite{Miclo2025,BenaimMiclo2024}.

As previously mentioned, optimal transport theory has also motivated control-based approaches to simulated annealing.
In particular, \cite{controlledSA} proposed to control Langevin dynamics through the superposition of an external vector field, yielding arbitrarily fast convergence rates in theory.
An alternative control-based approach formulates the problem as a stochastic control problem on a state-dependent temperature~\cite{GaoXuZhou2022}.

Beyond density-dependent noise or control via vector-field superposition, our approach shares structural similarities with particle-based global optimization methods combining local exploration and global interaction.
A notable example is Consensus-Based Optimization (CBO), where particles interact through a weighted consensus point biased toward low-cost regions~\cite{CarrilloChoiTotzeckTse2018,KaliseSharmaTretyakov2023}; see~\cite{Totzeck2022} for a recent overview. Another example is Particle Swarm Optimization (PSO)~\cite{KennedyEberhart1995}, where a population of particles evolves while being attracted both to their own best positions found so far and to the best position found so far by the swarm. See~\cite{GrassiPareschi2021} for a study of continuous-time PSO and its mean-field connection to CBO.

Finally, several hybrid methods aim not at concentrating on the global minimizers, but rather at visiting both local and global minima.
Intermittent diffusion alternates between deterministic gradient descent phases and stochastic Langevin phases, allowing the process to reach a neighborhood of a global minimizer with arbitrarily high probability in finite time~\cite{ChowYangZhou2013}.
Switched diffusion processes combine Langevin exploration with dynamics specifically designed for saddle-point search, providing an efficient framework for escaping basins of attraction of local minima and traversing complex energy landscapes~\cite{JournelMonmarche2023}.

\paragraph{Organization of the paper}

The paper is organized as follows.
Section~\ref{sec:preliminaries} introduces the notation and recalls several results from the literature that will be used throughout the paper.
In Section~\ref{sec:homogeneous}, we present the time-homogeneous swarm gradient dynamics and explain how the existence of an explicit invariant probability density allows one to envisage a control of the process.
Section~\ref{sec:limit} is devoted to the study of the weak limit of this invariant density and to its convergence toward a measure supported on the set of global minima of the potential~$U$.
In Section~\ref{sec:continuity}, we prove the existence of a vector field satisfying the continuity equation associated with the curve $(\rho_t)_{t\in[0,T]}$ of invariant measures of the homogeneous swarm gradient dynamics, as well as the absolute continuity of $(\rho_t)_{t\in[0,T]}$.
The existence and well-posedness of the controlled process are then established in Section~\ref{sec:controlled}.
Section~\ref{sec:implementation} discusses the algorithmic implementation of the controlled dynamics.
Finally, Section~\ref{sec:numerical} presents several numerical experiments on non-convex optimization problems, together with a comparison with classical controlled simulated annealing.

\section{Mathematical Background}
\label{sec:preliminaries}

\paragraph{Notation}
We denote by \( \mathcal{P}(\mathbb{R}^d) \) the set of all probability measures on \( \mathbb{R}^d \), and by \( \mathcal{P}_2(\mathbb{R}^d) \) the subset of measures with finite second moment.  
Similarly, we let \( \mathcal{D}_2(\mathbb{R}^d) \) denote the set of probability \emph{densities} on \( \mathbb{R}^d \) with finite second moment.

For a measure \( \mu \in \mathcal{P}(\mathbb{R}^d) \), we write \( \mu(\mathrm{d}x) = f(x)\,\mathrm{d}x \) when \( \mu \) admits a density \( f \) with respect to the Lebesgue measure, where \( \mathrm{d}x \) denotes the Lebesgue measure on \( \mathbb{R}^d \).  

We denote by \( |\cdot| \) the Euclidean norm, \( \langle \cdot, \cdot \rangle \) the associated scalar product, \( \|\cdot\|_{\mathrm{Fr}} \) the Frobenius norm, and \( \langle \cdot, \cdot \rangle_{\mathrm{Fr}} \) its associated inner product.

Given two measures \( \mu \) and \( \nu \) defined on two metric spaces \( X \) and \( Y \), we denote by \( \Pi(\mu,\nu) \) the set of \emph{couplings} between \( \mu \) and \( \nu \), i.e., the set of all probability measures on \( X \times Y \) whose first and second marginals are \( \mu \) and \( \nu \), respectively.

\paragraph{Optimal transport}
  
The \emph{2-Wasserstein distance} between two measures \( \mu, \nu \in \mathcal{P}_2(\mathbb{R}^d) \) is defined by
\[
W_2^2(\mu,\nu)
:= \inf_{\pi \in \Pi(\mu,\nu)} 
\int_{\mathbb{R}^d \times \mathbb{R}^d} |x - y|^2 \, \mathrm{d}\pi(x,y).
\]
The space \( (\mathcal{P}_2(\mathbb{R}^d), W_2) \) is a complete metric space.  
The Wasserstein distance corresponds to the square root of the optimal value of the Kantorovich problem for the quadratic cost.  
According to Brenier’s theorem~\cite[Theorem 2.12]{villani2003topics}, if \( \mu \) is absolutely continuous with respect to the Lebesgue measure, then there exists a unique optimal coupling \( \pi^* \) of the form \( \pi^* = (\mathrm{id} \times T)_\# \mu \), where \( T \) (called the \emph{Monge map}) is the gradient of a convex function.

\paragraph{Absolutely continuous curves in Wasserstein space}
We now recall a few results that will be used in the proof of Theorem~\ref{thm:existence vt}.  
Since \( (\mathcal{P}_2(\mathbb{R}^d), W_2) \) is a metric space, we can define absolutely continuous curves of measures.

\begin{definition}[Absolutely continuous curves and metric derivative]
Let \( I \subset \mathbb{R} \) be an open interval.  
A curve \( \eta : I \to \mathcal{P}_2(\mathbb{R}^d) \), \( t \mapsto \mu_t \), is said to be \emph{absolutely continuous} if its metric derivative,
\[
  |\mu'|(t) := \lim_{h \to 0} \frac{W_2(\mu_t, \mu_{t+h})}{|h|},
\]
exists for Lebesgue almost every \(t \in I\) and belongs to \( L^1(I) \).
\end{definition}

This notion of absolute continuity is closely related to the continuity equation and to the existence of an associated velocity field.

\begin{theorem}[Absolutely continuous curves and the continuity equation {\cite[Theorem 8.3.1]{ambrosio2008gradient}}]
\label{thm:AGS_continuity}
Let \( I \subset \mathbb{R} \) be an open interval, and let \( t \mapsto \mu_t \in \mathcal{P}_2(\mathbb{R}^d) \) be an absolutely continuous curve.  
Then there exists a time-dependent vector field \( v_t(x) : I \times \mathbb{R}^d \to \mathbb{R}^d \) such that the pair \( (\mu_t, v_t) \) satisfies the continuity equation
\begin{equation}
\label{eq:continuity_equation}
  \partial_t \mu_t + \nabla \!\cdot\! (v_t \mu_t) = 0.
\end{equation}
Moreover, it holds that \( \|v_t\|_{L^2(\mu_t, \mathbb{R}^d)} \in L^1(I) \).

Conversely, if such a field \( v_t \) exists and satisfies~\eqref{eq:continuity_equation} together with \( \|v_t\|_{L^2(\mu_t, \mathbb{R}^d)} \in L^1(I) \), then the curve \( t \mapsto \mu_t \) is absolutely continuous in \( (\mathcal{P}_2, W_2) \).
\end{theorem}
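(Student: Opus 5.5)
The statement is the classical characterization from Ambrosio--Gigli--Savaré, and I would prove the two directions separately. For the converse, I would derive an upper bound on the metric derivative from the continuity equation. For the direct implication, I would use a duality argument that produces the velocity field as a Riesz representer.

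\emph{Converse direction.} Suppose \((\mu_t,v_t)\) solves \eqref{eq:continuity_equation} with \(\|v_t\|_{L^2(\mu_t)}\in L^1(I)\).
\begin{enumerate}[label=(\roman*)]
\item First regularize. Convolve \(\mu_t\) with a Gaussian kernel \(\gamma_\varepsilon\), and set \(\mu_t^\varepsilon=\mu_t*\gamma_\varepsilon\) and \(v_t^\varepsilon=(v_t\mu_t)*\gamma_\varepsilon/\mu_t^\varepsilon\).
\item The pair \((\mu^\varepsilon,v^\varepsilon)\) still solves the continuity equation. By Jensen's inequality, \(\|v^\varepsilon_t\|_{L^2(\mu_t^\varepsilon)}\le\|v_t\|_{L^2(\mu_t)}\).
\item The field \(v^\varepsilon\) is locally Lipschitz in space, so its flow \(X_{s,t}\) is well defined. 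Uniqueness for the smooth continuity equation then gives \(\mu^\varepsilon_t=(X_{s,t})_\#\mu^\varepsilon_s\).
\item Use \((\mathrm{id},X_{s,t})_\#\mu^\varepsilon_s\) as a coupling, then apply Minkowski's inequality in time. This yields
\[
W_2(\mu^\varepsilon_s,\mu^\varepsilon_t)\le\int_s^t\|v^\varepsilon_r\|_{L^2(\mu^\varepsilon_r)}\,dr\le\int_s^t\|v_r\|_{L^2(\mu_r)}\,dr .
\]
\item Let \(\varepsilon\to0\) using lower semicontinuity of \(W_2\). This gives absolute continuity with \(|\mu'|(t)\le\|v_t\|_{L^2(\mu_t)}\).
\end{enumerate}

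\emph{Direct implication.} Let \(\mu_t\) be absolutely continuous.
\begin{enumerate}[label=(\roman*)]
\item By the standard reparametrization of absolutely continuous curves, I may assume \(|\mu'|\in L^\infty\), or at least work with the weight \(|\mu'|\in L^1\).
\item For \(\phi\in C_c^\infty(\mathbb{R}^d)\) and \(s<t\), take an optimal plan \(\pi\) between \(\mu_s\) and \(\mu_t\). A Taylor expansion and Cauchy--Schwarz give
\[
\Big|\int\phi\,d\mu_t-\int\phi\,d\mu_s\Big|\le \int|\nabla\phi(x)||y-x|\,d\pi + \tfrac12\|\nabla^2\phi\|_\infty W_2^2.
\]
\item Divide by \(|t-s|\) and pass to the limit. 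For a.e.\ \(t\), this yields
\[
\Big|\tfrac{d}{dt}\int\phi\,d\mu_t\Big|\le |\mu'|(t)\,\|\nabla\phi\|_{L^2(\mu_t)} .
\]
\item Extend this to test functions \(\phi(t,x)\in C_c^\infty(I\times\mathbb{R}^d)\). Define the linear functional \(L(\nabla\phi)=-\int_I\int\partial_t\phi\,d\mu_t\,dt\).
\item The bound above shows \(L\) is bounded on \(\{\nabla\phi\}\) in the Hilbert space \(L^2(\mu_t\,dt;\mathbb{R}^d)\), after weighting by \(|\mu'|\) if needed.
\item By Riesz representation, there is \(v\) in the closure of \(\{\nabla\phi\}\) with \(L(\nabla\phi)=\int\!\!\int\langle v,\nabla\phi\rangle\,d\mu_t\,dt\). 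This is exactly the distributional form of \eqref{eq:continuity_equation}.
\item The integrability \(\|v_t\|_{L^2(\mu_t)}\le|\mu'|(t)\) follows from the norm bound, applied on time slabs.
\end{enumerate}

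\emph{Main obstacle.} The hardest step is the time-localized duality in the direct implication. The Riesz argument on the space-time Hilbert space gives a global \(L^2\) bound. To obtain the pointwise-in-time estimate \(\|v_t\|_{L^2(\mu_t)}\le|\mu'|(t)\), and hence \(L^1(I)\) integrability, I would restrict to test functions supported in small time intervals and then apply a Lebesgue differentiation argument. In the converse, the delicate point is passing the flow representation through the mollification limit. That step is handled by narrow convergence of \(\mu^\varepsilon_t\to\mu_t\) together with lower semicontinuity of \(W_2\).
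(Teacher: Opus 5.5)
Your proposal is correct and is essentially the standard Ambrosio--Gigli--Savaré proof of Theorem~8.3.1, which the paper only cites without proof. For the converse you use a strictly positive mollifier, the flow representation $\mu^\varepsilon_t=(X_{s,t})_\#\mu^\varepsilon_s$ and Minkowski's inequality; for the direct implication you use reparametrization, the pointwise bound $\bigl|\tfrac{d}{dt}\int\phi\,d\mu_t\bigr|\le|\mu'|(t)\,\|\nabla\phi\|_{L^2(\mu_t)}$, Riesz representation on the closure of gradients, and localization in time via cutoffs $\eta(t)$. The only variation is that you get the derivative bound from a second-order Taylor expansion at the base point of the optimal plan, rather than from their upper-semicontinuous difference-quotient argument with optimal plans converging to the diagonal; this is legitimate in the present $p=2$, $\mathbb{R}^d$ setting.
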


When each measure \( \mu_t \) admits a density with respect to the Lebesgue measure, Brenier’s theorem yields an explicit form of the velocity field.

\begin{proposition}[Transport map characterization of the velocity field {\cite[Proposition 8.4.6]{ambrosio2008gradient}}]
\label{prop:charac_vt}
Let \( t \mapsto \mu_t \in \mathcal{P}_2(\mathbb{R}^d) \) be an absolutely continuous curve, and assume that each \( \mu_t \) is absolutely continuous with respect to the Lebesgue measure.  
Then the corresponding velocity field is given by
\[
  v_t = \lim_{h \to 0} \frac{T_{\mu_t \to \mu_{t+h}} - \mathrm{id}}{h}
  \quad \text{in } L^2(\mu_t),
\]
where \( T_{\mu_t \to \mu_{t+h}} \) is the unique optimal transport map from \( \mu_t \) to \( \mu_{t+h} \).
\end{proposition}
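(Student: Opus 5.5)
The approach is to identify the limit of the rescaled displacements $(T_{\mu_t\to\mu_{t+h}}-\mathrm{id})/h$ with the minimal-norm velocity field given by Theorem~\ref{thm:AGS_continuity}. Weak convergence comes from testing against smooth functions, and strong convergence then follows from comparing $L^2$ norms. The statement should be read for Lebesgue-a.e.\ $t\in I$, and $v_t$ is understood as the \emph{tangent} velocity field. This is the field lying in
\[
\mathrm{Tan}_{\mu_t}:=\overline{\{\nabla\phi:\phi\in C_c^\infty(\mathbb{R}^d)\}}^{L^2(\mu_t)}
\]
with $\|v_t\|_{L^2(\mu_t)}=|\mu'|(t)$ a.e. This is the field produced by the proof of Theorem~\ref{thm:AGS_continuity}, and it is the unique solution of~\eqref{eq:continuity_equation} of minimal norm.

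\textbf{Step 1: a priori bound and weak compactness.} Set $T_h:=T_{\mu_t\to\mu_{t+h}}$ and $w_h:=(T_h-\mathrm{id})/h$. Optimality of $T_h$ gives $\|w_h\|_{L^2(\mu_t)}=W_2(\mu_t,\mu_{t+h})/|h|$. At every $t$ where the metric derivative exists, this quantity tends to $|\mu'|(t)$. Hence $(w_h)$ is bounded in $L^2(\mu_t)$, and every sequence $h_n\to0$ has a subsequence converging weakly to some $w$ with $\|w\|_{L^2(\mu_t)}\le|\mu'|(t)$.

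\textbf{Step 2: identification of the weak limit.} Fix $\phi\in C_c^\infty$. By Taylor expansion,
\[
\int\phi\,\mathrm{d}\mu_{t+h}-\int\phi\,\mathrm{d}\mu_t=\int\big(\phi(T_h x)-\phi(x)\big)\,\mathrm{d}\mu_t
=h\int\langle\nabla\phi,w_h\rangle\,\mathrm{d}\mu_t+O\big(\|D^2\phi\|_\infty W_2^2(\mu_t,\mu_{t+h})\big).
\]
The error term is $O(h^2)$. On the other hand, the weak form of~\eqref{eq:continuity_equation} shows that the left-hand side equals $\int_t^{t+h}\int\langle\nabla\phi,v_s\rangle\,\mathrm{d}\mu_s\,\mathrm{d}s$. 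I would restrict to $t$ that are Lebesgue points of $s\mapsto\int\langle\nabla\phi,v_s\rangle\,\mathrm{d}\mu_s$ simultaneously for a countable family $\{\phi_k\}$ that is dense in $C_c^1$. This set of $t$ has full measure because $\|v_s\|_{L^2(\mu_s)}\in L^1(I)$. Dividing by $h$ and letting $h\to0$ then gives $\int\langle\nabla\phi_k,w\rangle\,\mathrm{d}\mu_t=\int\langle\nabla\phi_k,v_t\rangle\,\mathrm{d}\mu_t$ for all $k$, and by density for all $\phi\in C_c^\infty$. Thus $w-v_t\perp\mathrm{Tan}_{\mu_t}$.

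\textbf{Step 3: tangency of $w$ and conclusion.} I expect this to be the main obstacle. By Brenier's theorem, $T_h=\nabla\psi_h$ with $\psi_h$ convex, so $w_h=\nabla\big((\psi_h-|x|^2/2)/h\big)$ is a gradient of a locally semiconcave function. The difficulty is that this function is neither smooth nor compactly supported, and $\mu_t$ may vanish on sets of positive measure. I would first try to show that gradients of such functions belong to $\mathrm{Tan}_{\mu_t}$. The plan is to mollify, cut off, and pass to the limit in $L^2(\mu_t)$ by dominated convergence, using $\mu_t\ll\mathrm{d}x$ so that $\nabla\psi_h$ is defined $\mu_t$-a.e. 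If the direct approximation fails, I would fall back on the AGS characterization: $\mathrm{Tan}_{\mu_t}$ is the set of $v$ with $\|v+\xi\|_{L^2(\mu_t)}\ge\|v\|_{L^2(\mu_t)}$ for all divergence-free $\xi$, and this minimality holds for optimal displacements by cyclical monotonicity.

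Once $w_h\in\mathrm{Tan}_{\mu_t}$ is established, the weak limit $w$ also lies in $\mathrm{Tan}_{\mu_t}$, since this is a closed subspace. Step 2 then forces $w=v_t$. Finally, $\limsup_h\|w_h\|_{L^2(\mu_t)}=|\mu'|(t)=\|v_t\|_{L^2(\mu_t)}$, and weak convergence together with convergence of norms upgrades to strong convergence in $L^2(\mu_t)$. Since the limit does not depend on the chosen subsequence, the whole family $w_h$ converges to $v_t$ in $L^2(\mu_t)$ as $h\to0$.
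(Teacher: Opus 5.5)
The paper gives no proof of this statement; it imports it from \cite[Proposition~8.4.6]{ambrosio2008gradient}. You read it correctly: it holds for a.e.\ $t$, and $v_t$ is the tangent field with $\|v_t\|_{L^2(\mu_t)}=|\mu'|(t)$. Without these qualifiers the statement is false, since adding a $\mu_t$-divergence-free field changes $v_t$ but not the maps $T_{\mu_t\to\mu_{t+h}}$. Your Steps 1 and 2 are correct. They are the transport-map version of the AGS argument, which runs the same compactness-plus-identification scheme on rescaled optimal plans $(\pi^1,(\pi^2-\pi^1)/h)_\#\gamma_h$ so as to cover non-absolutely-continuous $\mu_t$ as well.

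\textbf{The gap is Step 3}, the one you flag yourself. Your conclusion rests entirely on $w_h\in\mathrm{Tan}_{\mu_t}$, and this is never proved. The claim is true, but it is a harder fact than the proposition, and neither of your sketches settles it. The mollify-and-cut-off route needs three things you do not supply:
\begin{itemize}
\item the Brenier potential $\psi_h$ must be finite on all of $\mathbb{R}^d$, which fails in general when $\mu_t$ vanishes on an open set;
\item a growth bound on $f_h=(\psi_h-|x|^2/2)/h$ itself, not just on $\nabla f_h$, is needed to kill the cut-off error $f_h\nabla\chi_R$;
\item a dominating function for the mollified gradients is needed, for instance by cutting off before mollifying.
\end{itemize}
These can be handled, at least when $\mu_t$ has full support as in this paper, but your proposal does not do so. The fallback restates tangency in its equivalent minimal-norm form and asserts that cyclical monotonicity yields it, without an argument.

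The missing idea is that you never need tangency of $w_h$. Step 2 gives $w-v_t\perp\mathrm{Tan}_{\mu_t}$, while $v_t\in\mathrm{Tan}_{\mu_t}$. Pythagoras and Step 1 then give, with all norms in $L^2(\mu_t)$,
\[
\|v_t\|^2+\|w-v_t\|^2=\|w\|^2\le\liminf_{n}\|w_{h_n}\|^2=|\mu'|(t)^2=\|v_t\|^2 .
\]
Hence $w=v_t$, and your norm-convergence argument finishes the proof. This is how AGS conclude, using only the minimality of the tangent field, which is the content of Theorem~\ref{thm:minimal_velocity_field}.

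You can even skip the subsequence extraction. The $\nabla\phi_k$ are $L^2(\mu_t)$-dense in $\mathrm{Tan}_{\mu_t}$ and $\sup_h\|w_h\|<\infty$, so Step 2 directly gives $\langle w_h,v_t\rangle_{L^2(\mu_t)}\to\|v_t\|^2$. Therefore
\[
\|w_h-v_t\|^2=\|w_h\|^2-2\langle w_h,v_t\rangle_{L^2(\mu_t)}+\|v_t\|^2\to0 .
\]
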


\begin{theorem}[Characterization of the minimal velocity field {\protect\cite[from Proposition 8.4.5 and Theorem 8.3.1]{ambrosio2008gradient}}]
\label{thm:minimal_velocity_field}
Let \( t \mapsto \mu_t \in \mathcal{P}_2(\mathbb{R}^d) \) be an absolutely continuous curve, and let \( v_t \in L^2(\mu_t; \mathbb{R}^d) \) satisfy~\eqref{eq:continuity_equation}.  
Then \( v_t \) is the unique vector field minimizing the \( L^2(\mu_t) \)-norm among all vector fields satisfying~\eqref{eq:continuity_equation} if and only if
\[
  v_t \in \overline{\{ \nabla \varphi : \varphi \in C_c^\infty(\mathbb{R}^d) \}}^{L^2(\mu_t)}
  = \mathrm{Tan}_{\mu_t} \mathcal{P}_2(\mathbb{R}^d),
  \quad \text{for a.e. } t \in I.
\]
\end{theorem}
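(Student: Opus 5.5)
The plan is to reduce the statement to a Hilbert-space orthogonality argument in $L^2(\mu_t;\mathbb{R}^d)$, carried out for almost every fixed time $t$.

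\textbf{Step 1: differences of admissible fields are divergence-free.} Let $v_t$ and $\tilde v_t$ both satisfy~\eqref{eq:continuity_equation} with $\|v_t\|_{L^2(\mu_t)},\|\tilde v_t\|_{L^2(\mu_t)}\in L^1(I)$. Set $w_t=\tilde v_t-v_t$. Subtracting the two distributional formulations gives
\[
\int_I\int_{\mathbb{R}^d}\langle w_t,\nabla_x \phi(t,x)\rangle\,\mathrm{d}\mu_t\,\mathrm{d}t=0
\qquad\text{for all }\phi\in C_c^\infty(I\times\mathbb{R}^d).
\]
Take test functions of the form $\phi(t,x)=\chi(t)\psi(x)$ with $\chi\in C_c^\infty(I)$ and $\psi\in C_c^\infty(\mathbb{R}^d)$. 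For each fixed $\psi$, this shows that $t\mapsto\int\langle w_t,\nabla\psi\rangle\,\mathrm{d}\mu_t$ vanishes for a.e. $t$.

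To get a single full-measure set of times, fix a countable family $\{\psi_k\}\subset C_c^\infty(\mathbb{R}^d)$ whose gradients are dense in $\{\nabla\psi:\psi\in C_c^\infty\}$ for the $C^1$ sup-norm. Since $\mu_t$ is a probability measure, density for this norm gives density in $L^2(\mu_t)$ for every $t$ at once. We conclude that, for a.e. $t$,
\[
w_t\perp \mathrm{Tan}_{\mu_t}\mathcal{P}_2(\mathbb{R}^d),
\]
that is, $\nabla\cdot(w_t\mu_t)=0$ in $\mathcal{D}'(\mathbb{R}^d)$. Conversely, adding to $v_t$ any measurable family $w_t$ with this orthogonality and $\|w_t\|_{L^2(\mu_t)}\in L^1$ preserves~\eqref{eq:continuity_equation}. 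Hence the admissible fields at time $t$ form exactly the affine set $v_t+(\mathrm{Tan}_{\mu_t})^\perp$.

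\textbf{Step 2: the projection argument.} Let $P_t$ denote the orthogonal projection of $L^2(\mu_t)$ onto the closed subspace $\mathrm{Tan}_{\mu_t}$. Every admissible field decomposes as $P_t v_t+(\text{element of }(\mathrm{Tan}_{\mu_t})^\perp)$, so by Pythagoras
\[
\|\tilde v_t\|^2=\|P_tv_t\|^2+\|\tilde v_t-P_tv_t\|^2 .
\]
Two consequences follow.
\begin{itemize}
\item $P_tv_t$ is the unique minimizer of the $L^2(\mu_t)$-norm among admissible fields, by strict convexity of the squared norm.
\item An admissible field is that minimizer if and only if it coincides with $P_tv_t$, i.e.\ if and only if it lies in $\mathrm{Tan}_{\mu_t}$. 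This is exactly the claimed characterization.
\end{itemize}
Moreover $\|P_tv_t\|\le\|v_t\|$, so the $L^1(I)$ integrability condition is preserved.

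\textbf{Step 3: measurability in $t$ (expected main obstacle).} One must check that $t\mapsto P_tv_t$ is jointly Borel in $(t,x)$, so that it is a legitimate time-dependent field; otherwise "minimal for a.e.\ $t$" is not meaningful. The first approach I would try:
\begin{itemize}
\item Realize $P_tv_t$ as the $L^2(\mu_t)$-limit of minimizers of $\|v_t-\sum_{k\le n}c_k\nabla\psi_k\|_{L^2(\mu_t)}$ over $c\in\mathbb{R}^n$.
\item The optimal coefficients solve a finite linear (Gram) system whose entries $\int\langle\nabla\psi_i,\nabla\psi_j\rangle\,\mathrm{d}\mu_t$ and $\int\langle v_t,\nabla\psi_j\rangle\,\mathrm{d}\mu_t$ are measurable in $t$. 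Hence each finite-dimensional projection is measurable.
\item Pass to the limit along a subsequence to obtain a measurable version.
\end{itemize}
If this is awkward, the fallback is to work in the space $L^2(\mu_t\,\mathrm{d}t)$ on $I\times\mathbb{R}^d$ (after localizing in time so that $\int_I\|v_t\|^2\,\mathrm{d}t<\infty$). There one projects onto the closure of $\{\nabla_x\phi:\phi\in C_c^\infty(I\times\mathbb{R}^d)\}$ and identifies its time-slices with $\mathrm{Tan}_{\mu_t}$ via the countable family $\{\psi_k\}$.
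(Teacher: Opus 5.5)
Your proposal is correct and follows the standard argument behind the Ambrosio--Gigli--Savar\'e result, which the paper cites without proof: differences of admissible fields are orthogonal to $\mathrm{Tan}_{\mu_t}\mathcal{P}_2(\mathbb{R}^d)$ for a.e.\ $t$ (via a countable dense family of test functions), minimality is then Pythagoras on the affine set $v_t+(\mathrm{Tan}_{\mu_t}\mathcal{P}_2(\mathbb{R}^d))^{\perp}$, and the jointly measurable tangent competitor needed for the ``only if'' direction comes, as in your fallback, from a projection in $L^2$ of $\mu_t\,\mathrm{d}t$ on $I\times\mathbb{R}^d$. One small fix to that fallback: $\|v_t\|_{L^2(\mu_t)}\in L^1(I)$ need not become square-integrable on subintervals, so instead of localizing, project in $L^2$ of the weighted measure $(1+\|v_t\|_{L^2(\mu_t)})^{-1}\mu_t\,\mathrm{d}t$, for which $\int_I \|v_t\|^2_{L^2(\mu_t)}(1+\|v_t\|_{L^2(\mu_t)})^{-1}\,\mathrm{d}t\le\int_I\|v_t\|_{L^2(\mu_t)}\,\mathrm{d}t<\infty$.
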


\paragraph{Sobolev spaces}
We now recall several definitions and properties of Sobolev spaces that will be used in the proof of Theorem~\ref{thm:existence vt}.

\begin{definition}[\( L^2 \)-Poincaré constant]
Let \( \mu \in \mathcal{P}(\mathbb{R}^d) \).  
The \( L^2 \)-Poincaré constant of \( \mu \) is defined by
\[
C_P(\mu) := \inf \left\{ C \geq 0 : 
\|f\|_{L^2(\mu)}^2 \leq C \|\nabla f\|_{L^2(\mu)}^2, 
\quad \forall f \in C_c^\infty(\mathbb{R}^d) \right\}.
\]
\end{definition}

Assume that \( C_P(\mu) < \infty \).  
We then define the Sobolev space \( H^1(\mu) \) associated with \( \mu \) as
\[
H^1(\mu) := \{ f \in L^2(\mu; \mathbb{R}) : \nabla f \in L^2(\mu; \mathbb{R}^d) \},
\]
where \( \nabla f \) denotes the weak gradient of \( f \).  
The space \( H^1(\mu) \) becomes a Hilbert space under the scalar product
\[
\langle f, g \rangle_\mu := \int f g\, \mathrm{d}\mu + \int \langle \nabla f, \nabla g \rangle\, \mathrm{d}\mu.
\]
By the Meyers–Serrin theorem, \( H^1(\mu) \) is the completion of \( C_c^\infty(\mathbb{R}^d) \) with respect to the norm induced by this scalar product (see Chapter~7 in~\cite{lieb2001analysis}).

We also use the semi-norm \( \|\nabla f\|_{L^2(\mu)} \) induced by
\[
\langle f, g \rangle_{\nabla,\mu} := \int \langle \nabla f, \nabla g \rangle\, \mathrm{d}\mu.
\]
Thanks to the Poincaré inequality, this semi-norm is equivalent to the full \( H^1(\mu) \)-norm (see Chapter~IX in~\cite{brezis}).  
Finally, we define the subspace of zero-mean functions:
\[
\dot{H}^1(\mu) := \{ f \in H^1(\mu) : \int f\, \mathrm{d}\mu = 0 \}.
\]
When \( C_P(\mu) < \infty \), the inner product \( \langle \cdot, \cdot \rangle_{\nabla,\mu} \) turns \( \dot{H}^1(\mu) \) into a Hilbert space.

\section{ From the Homogeneous Swarm Gradient Dynamics to the Controlled Annealing Strategy}
\label{sec:homogeneous}

In~\cite{HuangMalik2025}, the authors introduce a mean-field Langevin diffusion with a density-dependent temperature, which, from a structural viewpoint, can be regarded as a time-homogeneous version of the swarm gradient dynamics.
In this section, we first introduce this homogeneous model, then recall its main properties and invariant measure, and finally explain the controlled annealing strategy studied in the remainder of the paper.

\medskip

Let $U : \mathbb{R}^d \to \mathbb{R}$ be a $\mathcal{C}^1$ function, assumed to be nonnegative and possibly nonconvex. For $m > 1$ and an inverse temperature parameter $\beta \ge 1$, define the function $\varphi : [0, +\infty) \to \mathbb{R}$ by
\[
  \varphi(r) = r \ln r + \frac{r^m}{m-1},
\]
so that
\[
  \varphi'(r) = 1 + \ln r + \frac{m}{m-1}r^{m-1},
  \qquad 
  \varphi''(r) = \frac{1}{r} + m r^{m-2}.
\]
Hence, $\varphi$ is convex, of class $\mathcal{C}^2$ on $(0, +\infty)$, with $\varphi''(r) > 0$ and $\varphi'(0) = -\infty$.  

We also introduce
\[
  \alpha(r) = \frac{1}{r}\int_0^r s \varphi''(s)\,ds = 1 + r^{m-1}.
\]

The (time-homogeneous) Swarm Gradient process is then defined by the stochastic differential equation (SDE)
\begin{equation}
  dX_t = -\nabla U(X_t)\,dt 
  + \sqrt{\frac{2}{\beta}\,\alpha(\rho^{X_t}(X_t))}\,dB_t, 
  \qquad X_0 \sim \rho_0,
  \label{eq:sde}
\end{equation}
where $(B_t)_{t \ge 0}$ is a $d$-dimensional Brownian motion and $\rho^{X_t}$ denotes the marginal law of $X_t$.

The associated Fokker–Planck equation reads, for $\mu_t = \rho^{X_t}$,
\begin{equation}
  \partial_t \mu_t 
  = \nabla \cdot 
  \big(
    \mu_t( \nabla U 
    + \tfrac{1}{\beta}\nabla \varphi'(\mu_t))
  \big)
  = \nabla \cdot 
  \Big(
    \mu_t \nabla U 
    + \tfrac{1}{\beta}(\nabla \mu_t + \nabla \mu_t^m)
  \Big).
  \label{eq:fokker-planck}
\end{equation}

\medskip

Under the growth assumption
\begin{equation}
  \exists\, K > 0 \text{ such that } |\nabla U(y)| \leq K(1 + |y|), 
  \quad \forall y \in \mathbb{R}^d,
  \label{eq:gradV-growth}
\end{equation}
the process can be viewed as the Wasserstein subgradient flow of the functional
\[
  \mathcal{J}_\beta : \mathcal{P}_2(\mathbb{R}^d) 
  \to \mathbb{R} \cup \{+\infty\},
\]
defined for any $\mu \in \mathcal{P}_2(\mathbb{R}^d)$ by
\[
  \mathcal{J}_\beta(\mu) :=
  \begin{cases}
    J_\beta(\rho), & \text{if } \mu(dx) = \rho(x)\,dx , \\[0.5em]
    +\infty, & \text{otherwise},
  \end{cases}
\]
where
\[
  J_\beta(\rho)
  = \int_{\mathbb{R}^d}
  \Big(
    U(y)\rho(y)
    + \tfrac{1}{\beta}\varphi(\rho(y))
  \Big)\,dy,
\]
see Corollary~3.1 in~\cite{HuangMalik2025}.

Moreover, for any $\rho_0 \in D_2(\mathbb{R}^d)$  
such that $J_\beta(\rho_0) < +\infty$, there exists  
$\rho : [0, +\infty) \to D_2(\mathbb{R}^d)$ solving~\eqref{eq:fokker-planck},  
such that for $\mathcal{L}^1$-almost every $t \ge 0$,
\[
  \partial \mathcal{J}_\beta(\mu_t)
  =
  \Big\{
    \nabla U
    + \tfrac{1}{\beta}
      \big(
        \nabla \mu_t / \mu_t
        + \nabla \mu_t^m / \mu_t
      \big)
  \Big\},
\]
where $\mu_t(dx) = \rho_t(x)\,dx$, and $\partial \mathcal{J}_\beta(\mu)$ denotes the subdifferential of $\mathcal{J}_\beta$ at $\mu$ [Theorem~4.1, \cite{HuangMalik2025}].

\medskip

Furthermore, under the additional assumption
\begin{equation}
  \exists\, R, \delta > 0 \text{ such that } 
  U(y) \geq \Big(\tfrac{1}{\beta} (d + 2) + \delta\Big)\ln|y|, 
  \quad \forall\, |y| \geq R,
  \label{eq:V-lowerbound}
\end{equation}
there exists a unique minimizer of 
\[
  J_\beta : D_2(\mathbb{R}^d) \to \mathbb{R},
\]
given by
\[
  \rho_\beta(y)
  =
  \left(
    \frac{1}{m}
    W_0\!\left(
      m e^m
      e^{-(m-1)\beta (U(y) - C)}
    \right)
  \right)^{\!\frac{1}{m-1}},
\]
where $C$ is the unique positive constant ensuring that $\rho_\beta$ is a probability density [Theorem~6.1, \cite{HuangMalik2025}].  
Here, $W_0$ denotes the \emph{principal branch of the Lambert function}, defined by
\[
  \forall\, x, y > 0, \quad 
  x = y e^y \quad \Longleftrightarrow \quad y = W_0(x).
\]

Thus, $\rho_\beta(x)\,dx$ is the stationary distribution of the process~\eqref{eq:sde}.  
Finally, if $\nabla U$ is Lipschitz, Theorem~7.1 in~\cite{HuangMalik2025} ensures the existence of a sequence $(t_n)_{n \in \mathbb{N}} \subset [0, +\infty)$ such that $t_n \to +\infty$ and
\[
  \rho(t_n) \rightharpoonup \rho_\beta
  \quad \text{weakly in } \mathcal{P}(\mathbb{R}^d)
  \text{ as } n \to +\infty.
\]

\medskip

The process defined above can be interpreted as a \emph{modification of the Langevin diffusion} in which the temperature depends on the probability density.  
This dependence enhances exploration: in regions where trajectories tend to accumulate near local minima of $U$, the marginal densities $\rho^{X_t}$ take larger values, which increases the amplitude of the noise compared to the classical Langevin diffusion.  
Conversely, when a trajectory lies far from potential wells, the noise remains comparable to that of the standard Langevin model.  

To prevent \emph{back-and-forth} phenomena—i.e., the immediate reattraction of particles recently ejected from a local minimum—a uniform noise component, independent of the density, is incorporated into the dynamics.  
As a result, a particle that has escaped a local potential well will not be instantly drawn back into it.

\medskip

Since this process directly stems from the Langevin diffusion, it is natural to expect that, as the parameter $\beta$ increases, the stationary measure $\rho_\beta$ concentrates around the \emph{global minima} of $U$.  
Moreover, by making the process \emph{time-inhomogeneous}—that is, by replacing the constant parameter $\beta$ with a slowly varying cooling schedule
\[
  \beta : [0, +\infty) \to [1, +\infty),
\]
of class $\mathcal{C}^1$, nondecreasing and diverging to $+\infty$ sufficiently slowly—it becomes possible to obtain a stationary law supported on the global minima of $U$.

\medskip

In what follows, we therefore consider a \emph{cooling schedule} 
$\beta : [0, +\infty) \to [1, +\infty)$, $\mathcal{C}^1$ and nondecreasing,  
and we study the following process:
\begin{equation}
  dX_t = -\nabla U(X_t)\,dt 
  + \sqrt{\frac{2}{\beta(t)}\,\alpha(\rho^{X_t}(X_t))}\,dB_t, 
  \qquad X_0 \sim \rho_0,
  \label{eq:sde-timeinhom}
\end{equation}
where $(B_t)_{t \ge 0}$ is a $d$-dimensional Brownian motion and $\rho^{X_t}$ denotes the marginal law of $X_t$.

The associated Fokker–Planck equation is then
\begin{equation}
  \partial_t \mu_t 
  = \nabla \cdot 
  \big(
    \mu_t\big( \nabla U 
    + \tfrac{1}{\beta(t)}\nabla \varphi'(\mu_t)
  \big)\big)
  = \nabla \cdot 
  \Big(
    \mu_t \nabla U 
    + \tfrac{1}{\beta(t)}(\nabla \mu_t + \nabla \mu_t^m)
  \Big).
  \label{eq:fokker-planck-timeinhom}
\end{equation}

\medskip

The aim of this work is to apply the control-based framework developed in~\cite{controlledSA} to this process.  
Specifically, we aim to construct a vector field $(v_t)_{t \ge 0}$ satisfying the continuity equation
\[
  \partial_t \rho_t - \nabla \cdot (\rho_t v_t) = 0,
\]
where $\rho_t := \rho_{\beta(t)}$,  
such that, when superimposed on the process~\eqref{eq:sde-timeinhom},  
the marginals exactly follow the trajectory $(\rho_t)_{t \ge 0}$.  
Hence, the convergence rate depends solely on the choice of the cooling schedule $\beta(t)$.

For simplicity, we will use the same notation $\rho_t$ to denote both the probability measure and its Lebesgue density, i.e. $\rho_t(dx) = \rho_t(x)\,dx$.

Establishing the existence of a field $(v_t)_{t \geq 0}$ satisfying the above properties allows us to use the contrapositive of Theorem~\ref{thm:AGS_continuity}, and thereby to prove the absolute continuity of the curve $(\rho_t)_{t \ge 0}$.  
This property is crucial, as it enables the use of the optimal-transport characterization given by Proposition~\ref{prop:charac_vt}, which is extremely useful for approximating $v_t$ and discretizing the process under study.

\medskip

In the sequel, variants of assumption~\eqref{eq:V-lowerbound} will be used depending on the context.  
We now state a simple proposition illustrating its impact on the regularity of the measures~$\rho_t$.

\begin{proposition}\label{prop:moment}
Let $\ell \in \mathbb{N}$.  
Assume that there exist constants $R, \delta > 0$ such that
\begin{equation}
  U(y) \geq 
  \Big(\tfrac{1}{\beta(0)}(d + \ell) + \delta\Big)\ln|y|,
  \quad \forall\, |y| \ge R.
  \label{eq:l-lowerbound}
\end{equation}
Then, for all $t \ge 0$, we have $\rho_t \in \mathcal{D}_\ell(\mathbb{R}^d)$.
\end{proposition}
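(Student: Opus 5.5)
The plan is to bound $\rho_t$ pointwise by a multiple of the Gibbs-type density $e^{-\beta(t)(U(y)-C_t)}$. The growth hypothesis \eqref{eq:l-lowerbound} then makes the $\ell$-th moment integrable, uniformly over $t\ge 0$ because $\beta$ is nondecreasing. Here $\mathcal{D}_\ell(\mathbb{R}^d)$ is understood as the set of probability densities with finite $\ell$-th moment $\int |y|^\ell \rho_t(y)\,dy$.

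\textbf{Step 1: an elementary Lambert $W$ bound.} For $x>0$ we have $W_0(x)e^{W_0(x)}=x$ and $W_0(x)\ge 0$. Hence $W_0(x)\le x$.

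\textbf{Step 2: a pointwise bound on $\rho_t$.} Set $\beta=\beta(t)$ and $C=C_t$. By Step 1,
\[
\rho_t(y)\le \Big(e^m e^{-(m-1)\beta(U(y)-C)}\Big)^{\frac{1}{m-1}} = e^{\frac{m}{m-1}}\,e^{\beta C}\,e^{-\beta U(y)}.
\]
Since $C$ is a fixed finite positive constant for each $t$, the prefactor $e^{\frac{m}{m-1}}e^{\beta C}$ is a finite constant. We therefore only need integrability of $|y|^\ell e^{-\beta U(y)}$.

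\textbf{Step 3: splitting the moment integral.} On the ball $\{|y|<R\}$, we use $\rho_t\le 1\cdot\rho_t$ and the fact that $\rho_t$ is a probability density, so $\int_{|y|<R}|y|^\ell\rho_t(y)\,dy\le R^\ell$. On $\{|y|\ge R\}$, we use $U\ge 0$ together with \eqref{eq:l-lowerbound} and $\beta(t)\ge\beta(0)$. This gives
\[
\beta(t)U(y)\ge \beta(0)U(y)\ge (d+\ell+\beta(0)\delta)\ln|y|.
\]
Consequently
\[
|y|^\ell e^{-\beta(t)U(y)}\le |y|^{-d-\beta(0)\delta},
\]
which is integrable on $\{|y|\ge R\}$ in dimension $d$ because $\beta(0)\delta>0$. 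Combining the two regions gives $\int|y|^\ell\rho_t<\infty$, which is the claim.

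\textbf{Main obstacle.} The one delicate point is that the bound relies on $C_t$ being well defined and finite, i.e.\ on the existence of the normalizing constant for $\rho_{\beta(t)}$ from Theorem~6.1 of \cite{HuangMalik2025}. That theorem requires \eqref{eq:V-lowerbound} at inverse temperature $\beta(t)$. When $\ell\ge 2$, this follows from \eqref{eq:l-lowerbound} because $\beta(t)\ge\beta(0)$ and $\frac{1}{\beta(t)}(d+2)\le\frac{1}{\beta(0)}(d+\ell)$. For $\ell<2$, finiteness of the normalizing integral still holds directly by Step 3 with $\ell=0$: the same argument shows that $\int \bigl(W_0(me^m e^{-(m-1)\beta(U-c)})/m\bigr)^{1/(m-1)}$ is finite and continuous in $c$. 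This is exactly what is needed to define $C_t$, so I would simply note it and proceed.
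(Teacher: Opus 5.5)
Your argument is correct and is essentially the paper's proof: the bound $W_0(z)\le z$ gives $\rho_t\le e^{\frac{m}{m-1}+\beta(t)C_t}e^{-\beta(t)U}$, and on $\{|y|\ge R\}$ the hypothesis, together with $U\ge 0$ and $\beta(t)\ge\beta(0)$, reduces the moment to $\int_{|y|\ge R}|y|^{-d-\beta(0)\delta}\,dy<\infty$. The only difference is cosmetic: you control the ball via $\int\rho_t=1$, whereas the paper uses the same pointwise bound and $U\ge 0$ there. One small caveat is that your opening claim of uniformity in $t$ is not what the argument delivers, because the prefactor $e^{\beta(t)C_t}$ depends on $t$; the proposition only asserts finiteness for each fixed $t$, so nothing is lost.
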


\begin{proof}
Fix $t \ge 0$.  
By definition of $\rho_t$, there exists a constant $C(t) > 0$ such that
\[
  \rho_t(x)
  =
  \left(
    \frac{1}{m}W_0\!\left(
      m e^m e^{-(m-1)\beta(t)(U(x) - C(t))}
    \right)
  \right)^{\!\frac{1}{m-1}},
\]
where $W_0$ denotes the principal branch of the Lambert function.  
Since $W_0(z) \le \ln(1+z) \le z$ for all $z > 0$, we obtain
\[
  \rho_t(x) 
  \le 
  \left(
    e^{m} e^{-(m-1)\beta(t)(U(x) - C(t))}
  \right)^{\!\frac{1}{m-1}}
  = e^{\frac{m}{m-1} + \beta(t)C(t)} e^{-\beta(t)U(x)}.
\]
Hence,
\[
  \int_{\mathbb{R}^d} |x|^{\ell}\rho_t(x)\,dx
  \le 
  e^{\frac{m}{m-1} + \beta(t)C(t)}
  \int_{\mathbb{R}^d} |x|^{\ell} e^{-\beta(0)U(x)}\,dx,
\]
where we used the monotonicity of $\beta$.
Splitting the integral into the regions $\{|x|\le R\}$ and $\{|x|>R\}$, we have:
\[
  \int_{|x|\le R} |x|^{\ell} e^{-\beta(0)U(x)}\,dx < +\infty
\]
since $U \ge 0$, and, using~\eqref{eq:l-lowerbound},
\[
  \int_{|x|>R} |x|^{\ell} e^{-\beta(0)U(x)}\,dx
  \le \int_{|x|>R} |x|^{-(d+\beta(0)\delta)}\,dx.
\]
In $d$-dimensional spherical coordinates 
(see Theorem~3.4 in~\cite{SteinShakarchi2005}), the latter integral satisfies
\[
  \int_{|x|>R} |x|^{-(d+\beta(0)\delta)}\,dx
  = |S^{d-1}|\int_{R}^{\infty} r^{-(d+\beta(0)\delta)} r^{d-1}\,dr
  = K\,\frac{R^{-\beta(0)\delta}}{\beta(0)\delta} < +\infty,
\]
where the constant $K = |S^{d-1}|$ denotes the surface measure of the 
$(d-1)$-dimensional unit sphere and depends only on the dimension $d$.
 
This proves that $\rho_t \in \mathcal{D}_\ell(\mathbb{R}^d)$.
\end{proof}

\begin{remark}\label{rem:integrability}
As noted in the proof of Lemma~6.1 in~\cite{HuangMalik2025}, a similar argument also shows that, 
under assumption~\eqref{eq:l-lowerbound}, for all $t \ge 0$ and $k \ge 1$,
\[
  \rho_t \in L^k(\mathbb{R}^d),
\]
for any choice of $\ell \in \mathbb{N}^*$.
\end{remark}

\medskip

In the sequel, we first analyze the asymptotic behavior of $\rho_t$, showing that it converges weakly to a limiting measure $\rho_\infty$ supported on the set of global minima of $U$.  
We then establish the existence of a vector field $(v_t)_{t \ge 0}$ satisfying the above continuity property.

\section{Weak convergence of $\rho_t$}
\label{sec:limit}

The goal of this section is to show that the family $(\rho_t)_{t \geq 0}$ asymptotically concentrates around the global minima of the potential $U$ as $t \to +\infty$.  

The main difficulty lies in the fact that the normalization constant $C(t)$, which ensures that $\rho_t$ is a probability density, now depends on time.  
Moreover, this constant appears inside the Lambert function $W_0$, which prevents writing $\rho_t$ as a standard Gibbs-type density and thus makes the classical convergence theorems for such dynamics inapplicable.

\begin{theorem}
\label{th:weak-convergence}
Let $U : \mathbb{R}^d \to \mathbb{R}$ be a $\mathcal{C}^1(\mathbb{R}^d)$ function such that
\[
  \min_{x \in \mathbb{R}^d} U(x) \geq 0,
\]
and let $\beta : [0, +\infty) \to [1, +\infty)$ be a $\mathcal{C}^1$ non-decreasing function such that $\beta(t) \to +\infty$ as $t \to +\infty$.  

Assume furthermore that there exist $R, \delta > 0$ such that
\begin{equation}
  U(y) \geq (\tfrac{d}{\beta(0)} + \delta)\ln|y|,
  \quad \forall\, |y| \geq R.
  \label{eq:Vgrowth-weakconv}
\end{equation}

Then the family $(\rho_t)_{t \geq 0}$ defined by $\rho_t := \rho_{\beta(t)}$ converges weakly, as $t \to +\infty$, to a measure $\rho_\infty$ whose support is contained in the set of global minima of $U$.
\end{theorem}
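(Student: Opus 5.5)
Write $U_* := \min U \ge 0$, $M := \{U = U_*\}$ and $\beta_t := \beta(t)$. The growth condition~\eqref{eq:Vgrowth-weakconv} makes $U$ coercive, so $M$ is a nonempty compact set.

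The plan is to show two things: the family $(\rho_t)$ is tight, and for every $\varepsilon>0$ we have $\rho_t(\{U \ge U_*+\varepsilon\}) \to 0$. Prokhorov's theorem then gives weak subsequential limits. Each limit $\rho_\infty$ satisfies, by the portmanteau theorem applied to the open sets $\{U>U_*+\varepsilon\}$, that $\rho_\infty(\{U>U_*+\varepsilon\})=0$ for all $\varepsilon$. Hence $\operatorname{supp}\rho_\infty\subset M$. If $M$ is not a singleton, full convergence (rather than along subsequences) requires identifying the limit. I would first aim at the support statement along subsequences, and then discuss uniqueness separately, perhaps via a Laplace-type comparison.

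\textbf{Step 1: bounds on $\rho_t$ through $e^{-\beta_t U}$.} Set $g_t(x) := e^{-(m-1)\beta_t (U(x)-C_t)}$, so that $\rho_t = (W_0(m e^m g_t)/m)^{1/(m-1)}$. On one side, $W_0(z)\le z$ gives $\rho_t \le e^{m/(m-1)}e^{\beta_t C_t}e^{-\beta_t U}$, as in Proposition~\ref{prop:moment}. On the other side, $W_0$ is increasing and $W_0(z)\ge c\, z$ for $z\le z_0$; also $W_0(z)\ge \ln z-\ln\ln z$ for large $z$. These give a matching lower bound of the form $\rho_t \gtrsim e^{\beta_t C_t}e^{-\beta_t U}$ wherever $g_t$ is bounded. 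Intuitively, $\rho_t$ is a Gibbs density $\propto e^{-\beta_t U}$ whose height is capped where it would exceed $O(1)$. Indeed, where $g_t$ is large, $W_0$ is logarithmic, so $\rho_t \approx ((m-1)\beta_t(C_t-U)/m)^{1/(m-1)}$.

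\textbf{Step 2: control of $C_t$ (main obstacle).} The key estimate is that $C_t$ stays close to $U_*$. The upper bound is $C_t \le U_*+\varepsilon$ for $t$ large. Suppose $C_t > U_*+\varepsilon$. Then on the neighbourhood $\{U<U_*+\varepsilon/2\}$, which has positive Lebesgue measure by continuity, we get $\rho_t \gtrsim (\beta_t\varepsilon)^{1/(m-1)} \to\infty$. This contradicts $\int\rho_t=1$. For the lower bound, which I expect to be weaker but sufficient, use the logarithmic region: on $\{U<C_t\}$ the density is $\le (c\beta_t(C_t-U)+O(1))^{1/(m-1)}$, so the mass lives where $U \lesssim C_t + O(1/\beta_t)$.

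\textbf{Step 3: vanishing mass away from $M$.} On $A_\varepsilon := \{U\ge U_*+\varepsilon\}$, use the upper bound $\rho_t\le e^{m/(m-1)}e^{\beta_t(C_t-U)}$. With $C_t\le U_*+\varepsilon/2$ this gives
\[
  \rho_t(A_\varepsilon)\le e^{m/(m-1)}\int_{A_\varepsilon} e^{-\beta_t(U-U_*-\varepsilon/2)}\,dx .
\]
The integrand is $\le e^{-\beta_t\varepsilon/4}e^{-\beta_t(U-U_*-3\varepsilon/4)/1}$ on $A_\varepsilon$. For $\beta_t\ge 2\beta(0)$ it is dominated on the bounded part by $e^{-\beta_t\varepsilon/4}$ times a constant. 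On $|y|\ge R$ it is dominated by the integrable tail $e^{(\beta_t/2)\cdot}|y|^{-(\beta_t/\beta(0))(d+\beta(0)\delta)/2\cdot}$ coming from~\eqref{eq:Vgrowth-weakconv}, after enlarging $R$. This splits exactly as in the proof of Proposition~\ref{prop:moment}, and dominated convergence gives $\rho_t(A_\varepsilon)\to0$. Since $A_\varepsilon$ contains the complement of a compact set, this estimate also yields tightness. That completes the argument by the Prokhorov/portmanteau reasoning above.
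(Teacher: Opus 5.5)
Your proposal follows essentially the same route as the paper. The blow-up contradiction on a neighbourhood of a minimizer gives $\limsup_{t\to\infty} C_t\le \min U$ (Lemma~\ref{lem:bound-C}), and the bound $\rho_t\le e^{m/(m-1)}e^{-\beta_t(U-C_t)}$ together with the logarithmic growth condition yields tightness and vanishing mass on $\{U\ge U_*+\varepsilon\}$; Prokhorov and Portmanteau then conclude. The lower bounds in Steps~1--2 are never used, and the domination in Step~3 is cleanest written as $e^{-\beta_t(U-U_*-3\varepsilon/4)}\le e^{-\beta(0)(U-U_*-3\varepsilon/4)}$ on $A_\varepsilon$, which gives $\rho_t(A_\varepsilon)\le K_\varepsilon e^{-\beta_t\varepsilon/4}$ directly.

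The uniqueness issue you flag when $M$ is not a singleton is not settled by the paper either: its proof likewise only shows that every subsequential weak limit is concentrated on $M$. A Laplace-type identification would need local information on $U$ near its minimizers that the hypotheses do not provide, so your argument establishes exactly what the paper's does.
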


\medskip

Before proving Theorem~\ref{th:weak-convergence}, it is necessary to study the asymptotic behavior of the normalization constant $C(t)$.  
Indeed, if this constant grows too rapidly, there may exist a neighborhood of the global minima of $U$ where $U(y) - C(t) < 0$, which would compromise the desired convergence.

\begin{lemma}
\label{lem:bound-C}
We have:
\[
  \limsup_{t \to +\infty} C(t) \leq \min_{x \in \mathbb{R}^d} U(x).
\]
\end{lemma}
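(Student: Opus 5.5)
We argue by contradiction, using only the normalization $\int \rho_t = 1$ and a pointwise lower bound on $W_0$. Write $U^* := \min U \ge 0$ and suppose that $\limsup_{t\to\infty} C(t) > U^*$. Then there exist $\varepsilon>0$ and a sequence $t_n \to +\infty$ with $C(t_n) \ge U^* + 2\varepsilon$. Let $x^*$ be a global minimizer; it exists because $U$ is continuous and coercive by \eqref{eq:Vgrowth-weakconv}. By continuity of $U$, the set $B_\varepsilon := \{x : U(x) < U^*+\varepsilon\}$ is an open neighbourhood of $x^*$, so $|B_\varepsilon| > 0$. On $B_\varepsilon$ we have $U(x) - C(t_n) \le -\varepsilon$ for every $n$.

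The key step is to bound $W_0$ from below for large arguments. Since $W_0$ is increasing and $y = W_0(z)$ satisfies $y = \ln z - \ln y$, we have $W_0(z) \ge \ln z - \ln\ln z$ for $z \ge e$. In particular $W_0(z) \ge \tfrac12 \ln z$ for $z$ large. Crude monotonicity is already enough here: we only need $W_0(z) \to +\infty$ as $z\to\infty$.

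For $x \in B_\varepsilon$, the argument of $W_0$ in $\rho_{t_n}(x)$ satisfies
\[
m e^m e^{-(m-1)\beta(t_n)(U(x)-C(t_n))} \ge m e^m e^{(m-1)\beta(t_n)\varepsilon} \xrightarrow[n\to\infty]{} +\infty,
\]
because $\beta(t_n)\to\infty$. Hence
\[
\rho_{t_n}(x) \ge \Big(\tfrac1m W_0\big(m e^m e^{(m-1)\beta(t_n)\varepsilon}\big)\Big)^{1/(m-1)} =: a_n,
\]
with $a_n\to+\infty$ uniformly on $B_\varepsilon$. Using the logarithmic lower bound, $a_n$ in fact grows like $(\beta(t_n)\varepsilon)^{1/(m-1)}$.

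Integrating over $B_\varepsilon$ gives
\[
1 = \int \rho_{t_n} \ge a_n |B_\varepsilon| \to +\infty,
\]
which is a contradiction. Therefore $\limsup_{t\to\infty} C(t) \le U^*$.

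There is no real obstacle in this argument. The only points needing care are that $|B_\varepsilon|>0$, which follows from continuity of $U$ at $x^*$, and that $W_0$ is unbounded and increasing. If desired, I will also note for the sequel that $\rho_t$ is pointwise bounded above by $e^{\frac{m}{m-1}+\beta(t)C(t)}e^{-\beta(t)U}$. That bound belongs to the proof of the theorem rather than to this lemma.
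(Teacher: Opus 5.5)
Your proof is correct and follows essentially the same route as the paper. Both argue by contradiction: on a neighbourhood of a global minimizer one has $U - C(t_n) \le -\varepsilon$, so $\rho_{t_n}$ is bounded below there by a constant that blows up (because $\beta(t_n)\to\infty$ and $W_0$ is increasing and unbounded), which contradicts $\int \rho_{t_n} = 1$. Taking a single sequence with $C(t_n)\ge U^*+2\varepsilon$ handles the finite and infinite $\limsup$ cases at once, whereas the paper treats them separately.
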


\begin{proof}
Assume by contradiction that $\limsup_{t \to +\infty} C(t) > \min_{x \in \mathbb{R}^d} U(x)$. The first case is that
\[
   \limsup_{t \to +\infty} C(t) = l,
\]
where $l > \min_{x \in \mathbb{R}^d} U(x)$.  

Let $x_0 \in \mathrm{argmin}\,U(x)$. Then there exist $\varepsilon > 0$, a sequence $(t_n)_{n \geq 0} \subset [0, +\infty)$ and an integer $N \geq 0$ such that
\[
  \forall n \geq N, \quad C(t_n) > U(x_0) + \varepsilon.
\]

For all $n \geq N$, we have
\[
  U(x_0) - C(t_n) < -\varepsilon.
\]
By continuity of $U$, there exists a neighborhood $\mathcal{V}$ of $x_0$ such that
\[
  \forall x \in \mathcal{V},
  \quad U(x)  < U(x_0) +\frac{\varepsilon}{2}.
\]
Hence,
\[
  \forall x \in \mathcal{V}, \ \forall n \geq N, 
  \quad (x) - C(t_n) < -\frac{\varepsilon}{2}.
\]

We can express $\rho_t(x)$ as
\[
  \rho_t(x) = F_t(U(x) - C(t)),
\]
where $F_t$ is a positive and decreasing function of its argument $U(x) - C(t)$.  
Thus, for all $x \in \mathcal{V}$ and $n \geq N$, we have
\[
  \rho_{t_n}(x) > F_{t_n}\!\left(-\tfrac{\varepsilon}{2}\right) > 0.
\]
Integrating over $\mathcal{V}$ gives
\[
  1 
  \geq \int_{\mathcal{V}} \rho_{t_n}(x)\,dx 
  > F_{t_n}\!\left(-\tfrac{\varepsilon}{2}\right)\,\mathrm{Vol}(\mathcal{V}).
\]

However, from the explicit form of $\rho_t$, we have

\[
  F_{t_n}\!\left(-\tfrac{\varepsilon}{2}\right)
  = \left(
      \frac{1}{m}
      W_0\!\left(
        m e^m e^{(m-1)\beta(t_n)\frac{\varepsilon}{2}}
      \right)
    \right)^{\!\frac{1}{m-1}}
  \xrightarrow[n\to\infty]{} +\infty,
\]  

which contradicts the normalization condition
\(
  \int_{\mathbb{R}^d} \rho_t(x)\,dx = 1, \ \forall t \geq 0.
\)

The last case is when $\limsup_{t \to +\infty} C(t) = +\infty$. Then, for $\epsilon > 0$, there exists a sequence $(t_n)_{n \geq 0} \subset [0, +\infty)$ and an integer $N \geq 0$ such that
\[
  \forall n \geq N, \quad C(t_n) > U(x_0) + \varepsilon.
\]
The rest of the proof is similar to the previous case.
\end{proof}

We can now proceed with the proof of Theorem \ref{th:weak-convergence}.

\begin{proof}[Proof of Theorem \ref{th:weak-convergence}]
Consider a sequence $(\rho_{t_n})_{n \geq 0}$. By Lemma \ref{lem:bound-C} we have
\[
  \bar C := \sup_{n\geq 0} C(t_n) < +\infty.
\]
By the coercivity of $U$, there exists $R>0$ such that
\[
  \forall\, |x|\geq R,\qquad U(x)\geq \bar C.
\]

For each $n\geq 0$, recall that, by the explicit form of $\rho_{t_n}$,
\[
  \rho_{t_n}(x)=
  \left(
    \frac{1}{m}
    W_0\!\Big(
      m e^m e^{-(m-1)\beta(t_n)(U(x)-C(t))}
    \Big)
  \right)^{\!\frac{1}{m-1}}.
\]
For $z>0$ we use the inequality $W_0(z)\leq \ln(1+z)\leq z$, so that, for all $t$ and all $|x| \geq R$,
\[
  \rho_{t_n}(x)\leq  e^\frac{m}{m-1} e^{-\beta(t_n)(U(x)-\bar C)}.
\]

Since $\beta(t_n)\geq \beta(0)=:\beta_0$ for all $n\ge0$, we obtain
\[
  \rho_{t_n}(x)\leq e^\frac{m}{m-1}\,e^{-\beta_0 (U(x)-\bar C)}.
\]

From the growth assumption~\eqref{eq:Vgrowth-weakconv}, there exist $R_1\ge R$ and $\delta>0$ such that, for $|x|\ge R_1$,
\[
  \rho_{t_n}(x)\leq e^{\frac{m}{m-1} + \beta_0 \bar C} |x|^{-(d+\beta_0\delta)}.
\]
Thus, using $d$-dimensional spherical coordinates, there exists a constant $K>0$ (independent of $t$) such that, for all $t\ge0$,
\[
  \rho_{t_n}(\{|x|\ge R_1\})
  \le e^{\frac{m}{m-1} + \beta_0 \bar C} \int_{|x|\ge R_1} |x|^{-(d+\beta_0\delta)}\,dx =e^{\frac{m}{m-1} + \beta_0 \bar C} \frac{K}{\delta \beta_0}R^{-\beta_0 \delta}.
\]
Consequently, for any $\varepsilon>0$, there exists $R'>0$ such that
\[
  \forall\, n\ge 0,\qquad \rho_{t_n}(\{|x|\ge R'\})\le \varepsilon.
\]

In other words, the family $(\rho_{t_n})_{n\ge0}$ is \emph{tight}.  
By Prokhorov’s theorem, it is therefore relatively compact for the weak topology.  
Hence, one can extract a subsequence \((\rho_{t_k})_{k\ge0}\) that converges weakly to a probability measure \(\rho_\infty\).  
Let \(M:=\operatorname{argmin}_{\mathbb{R}^d}U\) and consider a bounded open set \(O\subset\mathbb{R}^d\) such that \(\overline{O}\cap M=\varnothing\).

Set
\[
  a:=\inf_{x\in O}\big(U(x)-l\big)>0.
\]

From Lemma \ref{lem:bound-C} we have \(\limsup_{t\to\infty}C(t)\le l\). Hence there exists \(N>0\) such that, for all \(k\ge N\),
\[
  C(t_k)\le l+\frac{a}{2}.
\]
Then, for all \(x\in O\) and \(k\ge N\),
\[
  U(x)-C(t_k)\ge \tfrac{a}{2}.
\]

Therefore, for all \(x\in O\) and \(k\ge N\),
\[
\begin{aligned}
  \rho_{t_k}(x)
  &\le
  \left(\frac{1}{m}\, m e^m e^{-(m-1)\beta(t_k)(U(x)-C(t_k))}\right)^{\!\frac{1}{m-1}}\\
  &= e^{\frac{m}{m-1}}\,
    e^{-\beta(t_k)\big(U(x)-C(t_k)\big)}.
\end{aligned}
\]
Using \(U(x)-C(t_k)\ge \tfrac{a}{2}\), we deduce that there exist constants \(A,L>0\) (here \(A=e^{\frac{m}{m-1}}\) and \(L=\tfrac{a}{2}\)) such that, for all \(k\ge N\) and all \(x\in O\),
\[
  \rho_{t_k}(x)\le A\,e^{- \beta(t_k) L}.
\]

Integrating over \(O\) yields
\[
  \rho_{t_k}(O)=\int_O \rho_{t_k}(x)\,dx
  \le A\,e^{-\beta(t_k) L}\,\mathrm{Vol}(O) \xrightarrow[k \to +\infty]{} 0.
\]
Hence
\[
  \lim_{k\to\infty}\rho_{t_k}(O)=0.
\]

By Portmanteau’s theorem, we have
\[
  \rho_\infty(O)\le \liminf_{k\to\infty}\rho_{t_k}(O)=0,
\]
so that \(\rho_\infty(O)=0\).  
The argument immediately extends to unbounded open sets disjoint from \(M\) by approximating them with an increasing sequence of bounded open sets, which shows that all the mass of \(\rho_\infty\) is concentrated on \(M\).
\end{proof}

\section{Absolute Continuity of the Curve \texorpdfstring{$(\rho_t)_{t \geq 0}$}{(rho\_t)\_t≥0}}
\label{sec:continuity}
\subsection{Motivations and Strategy}

We first present, in a heuristic manner, the motivations and the overall strategy of this section.

Consider the SDE~(\ref{eq:sde-timeinhom}), whose associated Fokker–Planck equation is given by~(\ref{eq:fokker-planck-timeinhom}).  
The diffusion term of this process depends on its marginal law, which makes it a nonstandard McKean–Vlasov-type equation. Such equations are notoriously difficult to analyze rigorously, and only a limited number of results are available in the literature.

Heuristically, one observes that when a trajectory is far from a local minimum, the process behaves like a standard simulated annealing scheme.  
Conversely, when it is trapped in a local minimum, the noise becomes stronger, which should accelerate the escape time from that energy well.  
One may thus conjecture that this process could converge to $\rho_\infty$ for a sufficiently slow cooling schedule $\beta(t)$—at least as slow as that required for classical simulated annealing, i.e., with at most logarithmic speed.

In~\cite{controlledSA}, a control strategy for simulated annealing was proposed to constrain the controlled process to follow a prescribed curve of Gibbs measures, thereby ensuring convergence toward $\rho_\infty$ with a rate depending solely on the choice of the \emph{cooling schedule}.  
Applying this principle to our process~(\ref{eq:sde-timeinhom}) would, on the one hand, provide a theoretical dynamics with faster convergence, and on the other hand, define a process whose convergence is guaranteed without having to study directly the nonstandard McKean–Vlasov-type evolution.

\medskip

The control idea relies on the following observation.  
Let $\mathcal{L}_t^*$ denote the (time-dependent) adjoint operator of the generator associated with~(\ref{eq:sde-timeinhom}). Then the Fokker–Planck equation reads:
\[
  \partial_t \mu_t = \mathcal{L}_t^* \mu_t.
\]
At each fixed time $t \ge 0$, if the temperature parameter $\beta(t)$ is frozen, the process becomes time-homogeneous and admits $\rho_t$ as an invariant measure, i.e., $\mathcal{L}_t^* \rho_t = 0$.

However, the law $\mu_t$ of the process evolves over time, and in general we have $\partial_t \mu_t \neq \mathcal{L}_t^* \rho_t$.  
The goal is therefore to modify the process so that its generator $\bar{\mathcal{L}}_t$ satisfies
\[
  \partial_t \rho_t = \bar{\mathcal{L}}_t^* \rho_t,
\]
so that the law of the process follows exactly the curve $(\rho_t)_{t \ge 0}$.

\medskip

To achieve this, we seek to construct a vector field $(v_t)_{t \ge 0}$ that satisfies the continuity equation:
\[
  \partial_t \rho_t + \nabla \!\cdot\! (\rho_t v_t) = 0,
  \qquad \text{for a.e. } t \geq 0.
\]

By superimposing such a vector field $v_t$ on the original process, the new Fokker–Planck equation becomes:
\[
  \partial_t \rho_t = \mathcal{L}_t^* \rho_t - \nabla \!\cdot\! (\rho_t v_t).
\]
Hence, the problem reduces to identifying a suitable field $(v_t)_{t \geq 0}$, which naturally leads to the framework of optimal transport theory.

\smallskip

Our strategy will therefore be to use the results of Ambrosio–Gigli–Savaré to prove that the curve $(\rho_t)_{t \ge 0}$ is absolutely continuous in the Wasserstein space $(\mathcal{P}_2(\mathbb{R}^d), W_2)$.  
This will then allow us to represent $v_t$ as a limit of an optimal transport problem, providing a tractable expression for the velocity field $v_t$.

\subsection{Main Result}

This section is devoted to the proof of the following result.

\begin{theorem}
\label{thm:existence vt}
Fix \( m > 1 \).  
Let 
\[
  \beta : [0, +\infty) \to [1, +\infty)
\]
be a non-decreasing \( \mathcal{C}^1 \) function such that \( \beta(t) \to +\infty \) as \( t \to +\infty \).  
Let \( U \in \mathcal{C}^1(\mathbb{R}^d) \) be a nonnegative potential whose set of minimizers is nonempty and compact, and assume that \(U\) satisfies conditions~(\ref{eq:gradV-growth}),  and the following growth assumptions:
\begin{equation}
  U(y) \geq (\tfrac{1}{\beta(0)}(d + 4) + \delta)\ln|y|,
  \quad \forall\, |y| \geq R.
  \label{eq:V-lowerbound-strong}
\end{equation}
And,
\begin{equation}
  \exists\, \alpha, R' > 0 \text{ such that } \forall\, |x| \geq R', \quad \langle x, \nabla U(x) \rangle \geq \alpha |x|^2.
  \label{eq:growthV}
\end{equation}
Then, for any \( T \ge 0 \), the curve
\[
\rho: [0,T] \to \mathcal{P}_2(\mathbb{R}^d), \quad t \mapsto \rho_t,
\]
is absolutely continuous.
Moreover, for every \( t \in [0, T] \), there exists a unique minimal-norm vector field \( v_t \in L^2(\mu_t; \mathbb{R}^d) \) such that the pair \( (\rho_t, v_t) \) satisfies the continuity equation in the distributional sense:
\begin{equation}
  \int_{\mathbb{R}^d} \langle v_t(x), \nabla f(x) \rangle \, \rho_t(x)\, dx 
  = \int_{\mathbb{R}^d} f(x)\, \partial_t \rho_t(x)\, dx,
  \quad \forall f \in C_c^\infty(\mathbb{R}^d).
  \label{eq:ce}
\end{equation}
\end{theorem}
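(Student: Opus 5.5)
The plan is to construct $v_t$ directly as a gradient field, by solving for each fixed $t$ a weighted elliptic problem in $\dot H^1(\rho_t)$ with the Lax--Milgram (Riesz) theorem. Then we bound $\|v_t\|_{L^2(\rho_t)}$ uniformly on $[0,T]$. Finally, the converse part of Theorem~\ref{thm:AGS_continuity} gives absolute continuity, and Theorem~\ref{thm:minimal_velocity_field} gives minimality and uniqueness.

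\emph{Step 1: the time derivative of $\rho_t$.} Write $\rho_t = g(\beta(t)(U - C(t)))$ with $g(s) = \big(\tfrac1m W_0(me^m e^{-(m-1)s})\big)^{1/(m-1)}$. First we prove that $t\mapsto C(t)$ is $\mathcal C^1$. This follows from the implicit function theorem applied to $\Phi(\beta,C)=\int g(\beta(U-C))\,dx - 1$, since $\partial_C\Phi = -\beta\int g'(\beta(U-C))\,dx>0$; differentiation under the integral is justified by the domination $\rho_t \le e^{m/(m-1)+\beta C}e^{-\beta(0)U}$ used in Proposition~\ref{prop:moment}. Then
\[
\partial_t\rho_t = g'\big(\beta(t)(U-C(t))\big)\big(\beta'(t)(U-C(t)) - \beta(t)C'(t)\big).
\]
From the Lambert identity, $g'(s) = -\rho\,h(\rho)$ with $h(\rho)=1/(1+m\rho^{m-1})$, so $|g'|\le \rho$. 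Hence $|\partial_t\rho_t|\le c_T\,(1+U)\rho_t$, uniformly for $t\in[0,T]$, because $C(t)$, $C'(t)$, $\beta$ and $\beta'$ are bounded on compacts. We also have $\int\partial_t\rho_t\,dx = 0$.

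\emph{Step 2: a uniform Poincaré inequality.} We need $\sup_{t\le T}C_P(\rho_t)<\infty$, and this is where assumption~\eqref{eq:growthV} enters. Write $\rho_t = e^{-V_t}$ with $V_t = -\log g(\beta(t)(U-C(t)))$. Outside a large ball, where $\rho_t$ is small, $V_t \approx \beta(t)(U-C(t)) + \text{const}$ and $\nabla V_t = \beta(t)h(\rho_t)\nabla U$ with $h\in[\tfrac12,1]$ there. So $\langle x,\nabla V_t\rangle \ge \tfrac{\beta(0)\alpha}{2}|x|^2$ for $|x|\ge R''$, uniformly in $t$. This is a Lyapunov condition: take $W=e^{\gamma|x|^2}$ and verify $LW\le -\lambda W + b\mathbf 1_{B}$ for $L=\Delta-\nabla V_t\cdot\nabla$. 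Combined with a local Poincaré inequality on the ball $B$, where $\rho_t$ is bounded above and below uniformly in $t$, the Bakry--Barthe--Cattiaux--Guillin criterion then gives a Poincaré constant bounded on $[0,T]$. The delicate point is the Laplacian term in $LW$; to control it, either use a Lyapunov function adapted to the weak regularity ($U\in\mathcal C^1$ only), or use the Holley--Stroock perturbation argument combined with a direct drift argument. I expect this uniform-in-$t$ Poincaré bound to be the main obstacle.

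\emph{Step 3: Lax--Milgram and the bound.} On $\dot H^1(\rho_t)$ consider the functional $\ell_t(f)=\int f\,\partial_t\rho_t\,dx$. By Cauchy--Schwarz and Step 1,
\[
|\ell_t(f)|\le c_T\|1+U\|_{L^2(\rho_t)}\|f\|_{L^2(\rho_t)}\le c_T\|1+U\|_{L^2(\rho_t)}\,C_P(\rho_t)^{1/2}\|\nabla f\|_{L^2(\rho_t)}.
\]
Here we use that $\ell_t$ vanishes on constants, so restricting to mean-zero $f$ is harmless. The quantity $\int U^2\rho_t$ is finite, uniformly on $[0,T]$: by~\eqref{eq:gradV-growth}, $U$ grows at most quadratically, and the moment bound of order $4$ from Proposition~\ref{prop:moment} follows from~\eqref{eq:V-lowerbound-strong}. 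This is precisely why $d+4$ appears. Riesz representation then gives $\phi_t\in\dot H^1(\rho_t)$ with $\int\langle\nabla\phi_t,\nabla f\rangle\rho_t = \ell_t(f)$ for all $f$. Set $v_t=\nabla\phi_t$; it satisfies~\eqref{eq:ce} and $\sup_{t\le T}\|v_t\|_{L^2(\rho_t)}<\infty$, hence $\|v_t\|_{L^2(\rho_t)}\in L^1(0,T)$.

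\emph{Step 4: conclusion.} Measurability in $t$ follows from continuity of the data in $t$. The converse of Theorem~\ref{thm:AGS_continuity} then shows that $t\mapsto\rho_t$ is absolutely continuous in $(\mathcal P_2,W_2)$; this uses $\rho_t\in\mathcal P_2$, which holds by Proposition~\ref{prop:moment}. Finally, $v_t$ lies in the $L^2(\rho_t)$-closure of $\{\nabla\varphi:\varphi\in C_c^\infty\}$, because $C_c^\infty$ is dense in $H^1(\rho_t)$ (Meyers--Serrin, as recalled in Section~\ref{sec:preliminaries}). Theorem~\ref{thm:minimal_velocity_field} therefore identifies $v_t$ as the unique minimal-norm solution.
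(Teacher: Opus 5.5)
Your proposal is correct and follows essentially the paper's route. You get $\mathcal C^1$ regularity of $C$ from the implicit function theorem, and a Poincaré bound uniform on $[0,T]$ from a Gaussian-type Lyapunov function with the Bakry--Barthe--Cattiaux--Guillin criterion. You then apply Riesz representation on $\dot H^1(\rho_t)$, using the fourth moment supplied by the $d+4$ growth condition, and conclude with the converse of Theorem~\ref{thm:AGS_continuity} and Theorem~\ref{thm:minimal_velocity_field}. Your crude bound $|\partial_t\rho_t|\le c_T(1+U)\rho_t$ replaces the paper's reweighted measure $\rho_t^a$; the paper's version only sharpens the constant to $|\beta'(t)|\sqrt{P(t)}\sqrt{\mathbb{E}_{\rho_t}[U^2]}$.

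The obstacle you flag in Step~2 is not real. For $W=e^{\gamma|x|^2}$ one has $LW=2\gamma\big(d+2\gamma|x|^2-\langle x,\nabla V_t\rangle\big)W$, so only $\nabla V_t=\beta(t)h(\rho_t)\nabla U$ enters. Hence $U\in\mathcal C^1$ suffices, exactly as in the paper's Lemma~\ref{lem:poincare}.
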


\vspace{0.5em}

Before proceeding to the full proof of the theorem, we begin by studying the time regularity of the density \( \rho_t \) and the structure of its time derivative.  
The normalization constant \( C(t) \), ensuring that \( \rho_t \) defines a probability density, now depends on time.  
We shall first assume that \( C \) is differentiable, and later justify that this assumption is indeed valid.

Let \( t > 0 \) and \( x \in \mathbb{R}^d \). We set
\[
  g(t,x) = m e^m e^{-(m-1)\beta(t)(U(x)-C(t))} > 0,
\]
so that
\[
  \rho_t(x) = \left(\frac{1}{m}W_0(g(t,x))\right)^{\frac{1}{m-1}}.
\]

We compute
\[
  \partial_t g(t,x) 
  = \big[-\beta'(t)(m-1)(U(x) - C(t)) + \beta(t)(m-1)C'(t)\big]\,g(t,x).
\]
Moreover, for \( x > 0 \), we recall that 
\[
  W'_0(x) = \frac{1}{x + e^{W_0(x)}}.
\]
Thus,
\[
  \partial_t\!\left(\frac{1}{m}W_0(g(t,x))\right)
  = \frac{1}{m}\,\frac{\partial_t g(t,x)}{g(t,x) + e^{W_0(g(t,x))}}.
\]
Hence,
\begin{align*}
  \partial_t \rho_t(x)
  &= \frac{1}{m}\big[-\beta'(t)(U(x) - C(t)) + \beta(t)C'(t)\big]
  \frac{g(t,x)}{g(t,x) + e^{W_0(g(t,x))}}\,\rho_t(x)^{2-m}\\
  &= \frac{1}{m}\big[-\beta'(t)(U(x) - C(t)) + \beta(t)C'(t)\big]
  \frac{g(t,x)}{g(t,x) + e^{W_0(g(t,x))}}\,\left(\frac{1}{m}W_0(g(t,x))\right)^{-1}\rho_t(x)\\
   &=\big[-\beta'(t)(U(x) - C(t)) + \beta(t)C'(t)\big]
  \frac{1}{1 + W_0(g(t,x))}\,\rho_t(x),
\end{align*}
where we used the elementary identity of the Lambert function:
\[
  W_0(z)e^{W_0(z)}=z,\qquad z>0.
\]
Setting \( a(t,x) := \frac{1}{1+W_0\!\big(g(t,x)\big)} \), we have
\[
\partial_t \rho_t(x) = \big[-\beta'(t)(U(x) - C(t)) + \beta(t)C'(t)\big]
  a(t,x)\rho_t(x).
\]

Before establishing the full proof of Theorem~\ref{thm:existence vt}, it is necessary to show the differentiability of \(C(t)\) and derive an explicit formula for its derivative.

\begin{lemma}
\label{lem:C-differentiable}
For every \(t>0\), let \(C_t\) denote the unique positive constant such that \(\rho_t\) is a probability density.  
Then there exists a function \(C:(0,+\infty)\to\mathbb{R}\) of class \(\mathcal{C}^1\) such that
\[
  C_t = C(t), \quad \forall\, t>0.
\]

Moreover, for all \(t>0\), we have the explicit formula
\begin{equation}
\label{eq:Cprime-expected}
  C'(t) \;=\; \frac{\beta'(t)}{\beta(t)}\,
  \mathbb{E}_{\rho_t^a}\big[U(X)-C(t)\big],
\end{equation}
where the weighted density \(\rho_t^a\) is defined by
\[
  \rho_t^a(x) \;=\; \frac{a(t,x)\,\rho_t(x)}{Z_t^a},\qquad
  Z_t^a:=\int_{\mathbb{R}^d} a(t,x)\,\rho_t(x)\,dx,
\]
and the function \(a(t,x)\) is given by
\[
  a(t,x)
  \;=\; \frac{1}{1+W_0\!\big(g(t,x)\big)},
\]
with
\[
  g(t,x)=m e^m e^{-(m-1)\beta(t)(U(x)-C(t))}.
\]
Finally, there exists a continuous function \(M:(0,+\infty)\to(0,1)\) such that, for all \(t>0\) and \(x\in\mathbb{R}^d\),
\[
  0 < M(t) \le a(t,x) < 1.
\]
\end{lemma}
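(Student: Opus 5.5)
We will use the implicit function theorem applied to the normalization map
\[
  \Phi(t,c) := \int_{\mathbb{R}^d} \Big(\tfrac{1}{m}W_0\big(m e^m e^{-(m-1)\beta(t)(U(x)-c)}\big)\Big)^{\frac{1}{m-1}}\,dx - 1,
  \qquad (t,c)\in(0,+\infty)\times\mathbb{R}.
\]
By [Theorem~6.1, \cite{HuangMalik2025}], for each \(t\) there is a unique \(c=C_t\) with \(\Phi(t,c)=0\). We then show that \(\Phi\) is \(\mathcal{C}^1\) with \(\partial_c\Phi>0\). The implicit function theorem gives a local \(\mathcal{C}^1\) solution near each \((t_0,C_{t_0})\), and uniqueness of the root lets us patch these into a global \(\mathcal{C}^1\) function \(C\) with \(C(t)=C_t\).

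\textbf{Differentiating under the integral.} This is the main obstacle. The pointwise derivatives are given by the same computation as above for \(\partial_t\rho_t\):
\[
\partial_c \rho = \beta(t)\,a\,\rho, \qquad \partial_t\rho = -\beta'(t)(U-c)\,a\,\rho .
\]
We need an integrable majorant that is locally uniform in \((t,c)\) on a compact neighbourhood \([t_0-\eta,t_0+\eta]\times[c_0-\eta,c_0+\eta]\). Since \(0<a<1\), \(\beta\ge\beta(0)\) and \(W_0(z)\le z\), we get
\[
\rho \le e^{\frac{m}{m-1}+\beta(t)(c_0+\eta)}\,e^{-\beta(0)U(x)}.
\]
So it suffices that \((1+U)e^{-\beta(0)U}\in L^1\). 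This follows from the lower bound~\eqref{eq:V-lowerbound-strong}, because \(U e^{-\beta(0)U}\le C_\epsilon e^{-(\beta(0)-\epsilon)U}\) and the slack \(\delta\) absorbs the loss, exactly as in the proof of Proposition~\ref{prop:moment}. Continuity of the resulting derivatives in \((t,c)\) then follows from dominated convergence with the same majorant, using continuity of \(\beta,\beta'\) and of \(W_0\).

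\textbf{Sign, implicit derivative and formula.} We have \(\partial_c\Phi=\beta(t)\int a\rho\,dx=\beta(t)Z_t^a>0\), since \(a>0\) and \(\rho>0\). The implicit function theorem then gives
\[
C'(t)=-\frac{\partial_t\Phi}{\partial_c\Phi}=\frac{\beta'(t)\int (U-C(t))\,a\rho\,dx}{\beta(t)Z^a_t},
\]
which is exactly \eqref{eq:Cprime-expected}.

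\textbf{Bounds on \(a\).} The inequality \(a<1\) is immediate from \(W_0>0\) on \((0,\infty)\). For the lower bound, we use \(U\ge0\), so that \(g(t,x)\le m e^m e^{(m-1)\beta(t)C(t)}\). Since \(W_0\) is increasing,
\[
a(t,x)\ge M(t):=\frac{1}{1+W_0\big(m e^m e^{(m-1)\beta(t)C(t)}\big)}\in(0,1).
\]
This \(M\) is continuous because \(C\) is now known to be \(\mathcal{C}^1\).
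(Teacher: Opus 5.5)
Your proposal is correct and follows the paper's proof essentially step by step. The shared steps are: the implicit function theorem applied to the normalization map, patched into a global $\mathcal{C}^1$ function by uniqueness of the root; the quotient $-\partial_t F/\partial_C F$, which yields the weighted-expectation formula; and the bound $g(t,x)\le m e^m e^{(m-1)\beta(t)C(t)}$ (from $U\ge 0$), which defines $M(t)$. Beyond this, you supply details the paper leaves implicit or delegates to the citation of Huang--Malik: the dominated-convergence justification for differentiating under the integral, with the locally uniform majorant $(1+U)e^{-\beta(0)U}$ that is integrable under the growth assumption, and the direct identity $\partial_c\Phi=\beta(t)Z_t^a>0$.
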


\begin{proof}
We define the function
\[
F: [0,+\infty) \times \mathbb{R} \longrightarrow (-1,+\infty), \quad (t,C) \longmapsto \int_{\mathbb{R}^d} \rho_t(x;C)\,dx - 1,
\]
where
\[
\rho_t(x;C) := \left( \frac{1}{m} W_0\!\Big(m e^m e^{-(m-1)\beta(t)(U(x)-C)}\Big) \right)^{\frac{1}{m-1}}.
\]
For a fixed time \(t'\), by Corollary 6.1 of~\cite{HuangMalik2025}, there exists a unique \(C^*>0\) such that
\[
F(t', C^*) = 0.
\]
Moreover, the function \(F(t',\cdot)\) is strictly increasing, i.e.,
\[
\partial_C F(t', C) > 0, \quad \forall C>0.
\]
By the implicit function theorem, there exists an open neighborhood \(U\) of \((t', C^*)\) and an open interval \(I \subset (0,+\infty)\) containing \(t'\), as well as a function
\[
C: I \to \mathbb{R}, \quad C \in \mathcal{C}^1(I),
\]
such that
\[
F(t, C(t)) = 0, \quad \forall t \in I.
\]
Furthermore, for all \(t \in I\), we have
\begin{equation}
\label{eq:Cprime}
C'(t) = - \frac{\partial_t F(t, C(t))}{\partial_C F(t, C(t))}.
\end{equation}
Finally, by uniqueness of \(C^*\) for all times \(t\), we can extend the function \(C(\cdot)\) to the interval \((0,+\infty)\) while maintaining the class \(\mathcal{C}^1\).

Thus, for all \(t > 0\),
\begin{align*}
C'(t)
&= -\frac{\displaystyle \int_{\mathbb{R}^d} -\beta'(t)(U(x)-C(t)) \frac{g(t,x)}{g(t,x) + e^{W_0(g(t,x))}} \rho_t(x)^{2-m} \, dx}{\displaystyle \int_{\mathbb{R}^d} \beta(t) \frac{g(t,x)}{g(t,x) + e^{W_0(g(t,x))}} \rho_t(x)^{2-m} \, dx} \\
&= \frac{\beta'(t)}{\beta(t)} \frac{\displaystyle \int_{\mathbb{R}^d} (U(x)-C(t))\, a(t,x)\, d\rho_t(x)}{\displaystyle \int_{\mathbb{R}^d} a(t,x)\, d\rho_t(x)}.
\end{align*}

The equality~\eqref{eq:Cprime-expected} then follows immediately by recognizing the expectation with respect to the probability measure \(\rho_t^a\).

It remains to establish bounds on \(a(t,x)\). Since \(W_0(g) > 0\) for \(g > 0\), we immediately have \(a(t,x) < 1\).

Moreover, the expression of \(g(t,x)\) shows that, for \(x \in \mathbb{R}^d\),
\[
0 < W_0(g(t,x)) \le W_0\Big(m e^m e^{(m-1)\beta(t) C(t)}\Big).
\]  
We then set
\[
M(t) := \frac{1}{1 + W_0\big(m e^m e^{(m-1)\beta(t) C(t)}\big)} > 0.
\]
By construction, \(M(t)\) is strictly positive and, for all \(x\),
\[
a(t,x) \ge M(t).
\]
The continuity of \(M(t)\) in \(t\) follows from the continuity of \(t \mapsto \beta(t)C(t)\) and the continuity of \(W_0\) on \((0,\infty)\).

This completes the proof of the second assertion of the lemma.
\end{proof}

\medskip

Another essential ingredient in the proof relies on a functional property of the measure \(\rho_t\), formulated in the following lemma.

\begin{lemma}
\label{lem:poincare}
Under the same assumptions on \(V\) and \(\beta\) as in Theorem~\ref{thm:existence vt}, let \(T > 0\).  
Then, for all \(t \in [0, T]\), the measure \(\rho_t\) satisfies the following Poincaré inequality:
\[
\mathrm{Var}_{\rho_t}(f) := \int_{\mathbb{R}^d} \left(f - \int f\, d\rho_t \right)^2 \, d\rho_t
\le C_P(\rho_t) \int_{\mathbb{R}^d} |\nabla f|^2 \, d\rho_t, 
\quad \forall f \in C_c^\infty(\mathbb{R}^d).
\]
Moreover, there exists a continuous function
\[
P: [0, T] \to (0,+\infty)
\]
such that, for all \(t \in [0, T]\),
\[
C_P(\rho_t) \le P(t).
\]
\end{lemma}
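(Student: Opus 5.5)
We prove the Poincaré inequality for each \(\rho_t\) by a bounded-perturbation argument: compare \(\rho_t\) with a Gibbs-type reference measure that satisfies a Poincaré inequality under the drift condition~\eqref{eq:growthV}, then transfer the inequality using two-sided density bounds (Holley--Stroock). The point is that all constants depend continuously on \(\beta(t)\) and \(C(t)\), which are continuous on \([0,T]\) by Lemma~\ref{lem:C-differentiable}.

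\emph{Step 1: reference measure.} Take \(\nu_t(dx)=Z_t^{-1}e^{-\beta(t)U(x)}dx\). By~\eqref{eq:V-lowerbound-strong}, \(Z_t<\infty\). Since \(U\in\mathcal{C}^1\), we can use the Lyapunov criterion of Bakry--Barthe--Cattiaux--Guillin with \(W(x)=e^{\gamma|x|^2}\), or \(W=1+|x|^2\). The generator is \(L_t=\Delta-\beta(t)\nabla U\cdot\nabla\), and by~\eqref{eq:growthV}, for \(|x|\ge R'\),
\[
L_t(1+|x|^2)=2d-2\beta(t)\langle x,\nabla U(x)\rangle\le 2d-2\alpha\beta(t)|x|^2\le -\theta(1+|x|^2)
\]
once \(|x|\) is large enough. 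This gives \(L_tW\le-\theta W+b\mathbf 1_{B_r}\) with \(\theta,b,r\) depending continuously on \(\beta(t)\in[\beta(0),\beta(T)]\). On the ball \(B_r\), \(\nu_t\) has a density bounded above and below, so the local Poincaré constant on \(B_r\) is controlled by the Lebesgue one times \(\exp(\beta(t)\operatorname{osc}_{B_r}U)\). The Lyapunov theorem then gives \(C_P(\nu_t)\le P_0(t)\), an explicit continuous function of \(\beta(t)\).

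\emph{Step 2: comparison of densities.} Write \(\rho_t=h_t\,\nu_t\). We need \(0<c_1(t)\le h_t\le c_2(t)\). For the upper bound, \(W_0(z)\le z\) gives
\[
\rho_t(x)\le e^{\frac m{m-1}+\beta(t)C(t)}e^{-\beta(t)U(x)}.
\]
For the lower bound, use \(W_0(z)\ge z/(1+W_0(z))\ge M(t)z\) for \(z\le z_{\max}(t)=me^me^{(m-1)\beta(t)C(t)}\), with \(M(t)\) from Lemma~\ref{lem:C-differentiable}. Indeed, \(W_0(z)=ze^{-W_0(z)}\) and \(e^{-W_0}\ge 1/(1+W_0)\)... but this is not quite right, since \(e^{-w}\le 1/(1+w)\). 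Instead we bound \(e^{-W_0(z)}\ge e^{-W_0(z_{\max})}\) by monotonicity. This yields
\[
\rho_t(x)\ge \big(e^{m}e^{-W_0(z_{\max}(t))}\big)^{\frac1{m-1}}e^{\beta(t)C(t)}e^{-\beta(t)U(x)}.
\]
So \(\log h_t\) has oscillation at most \(\frac{1}{m-1}W_0(z_{\max}(t))+\frac{m}{m-1}-\frac{m}{m-1}\), plus harmless terms, which is a continuous function \(\Omega(t)\) of \(t\).

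\emph{Step 3: conclusion.} Holley--Stroock gives \(C_P(\rho_t)\le e^{\Omega(t)}C_P(\nu_t)\le e^{\Omega(t)}P_0(t)=:P(t)\). This is continuous on \([0,T]\) because \(\beta\) and \(C\) are continuous there. At \(t=0\) we use the continuity of \(C\) on \([0,T]\), which follows from the same implicit-function argument. The main obstacle is Step 1: making the Lyapunov/local-Poincaré constants explicitly continuous in \(\beta\) when \(U\) is only \(\mathcal{C}^1\). If the Lyapunov route gives trouble, we can fall back on a spectral-gap argument via the Witten Laplacian. However, the Lyapunov route with \(W=1+|x|^2\) only needs first derivatives of \(U\), and continuity follows because every constant is a monotone function of \(\beta(t)\in[\beta(0),\beta(T)]\).
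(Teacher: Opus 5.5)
Your argument is correct, but it takes a different route from the paper.

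The paper applies the Bakry--Barthe--Cattiaux--Guillin criterion directly to $\rho_t=e^{-U_t}$. It computes $\nabla U_t=\beta(t)\,a(t,x)\,\nabla U$ and uses $a\ge M(t)$ from Lemma~\ref{lem:C-differentiable}. This transfers the drift condition \eqref{eq:growthV} to $U_t$ with a constant $\alpha_T=\alpha\beta(0)M_T$ that is uniform on $[0,T]$. It then takes $W=e^{\alpha_T|x|^2/4}$, and the remaining $t$-dependence sits in $\mathrm{Osc}_{B(0,R)}(U_t)$.

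You instead run the Lyapunov criterion for the Gibbs measure $\nu_t\propto e^{-\beta(t)U}$, where only $U$ enters. You then transfer the inequality by Holley--Stroock via the sandwich
\[
\big(e^{m}e^{-W_0(z_{\max}(t))}\big)^{\frac{1}{m-1}}e^{\beta(t)C(t)}e^{-\beta(t)U}\;\le\;\rho_t\;\le\;e^{\frac{m}{m-1}}e^{\beta(t)C(t)}e^{-\beta(t)U},
\]
which is valid because $U\ge 0$ gives $g(t,x)\le z_{\max}(t)$. Both proofs hinge on the same quantity $W_0(z_{\max}(t))$, since $M(t)=1/(1+W_0(z_{\max}(t)))$. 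Both also need continuity of $C$ up to $t=0$.

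What your route buys is modularity:
\begin{itemize}
\item It shows that $\rho_t$ is a bounded log-perturbation of $e^{-\beta(t)U}$, so any Poincar\'e criterion for Gibbs measures transfers verbatim.
\item No $x$-derivative of $\rho_t$ is needed.
\item The price of the Lambert structure appears as one explicit factor $e^{W_0(z_{\max}(t))/(m-1)}$.
\end{itemize}
In the paper, this price is spread through the degraded drift constant $\alpha_T$, and hence through $\theta$, $R$, $b$ and the oscillation of $U_t$ on the ball. For the qualitative statement on a compact interval $[0,T]$, the difference is immaterial.

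When writing it up, make two fixes. First, drop the abandoned inequality $e^{-w}\ge 1/(1+w)$. Second, state the oscillation bound cleanly: the common factor $Z_te^{\beta(t)C(t)}$ cancels, so $\operatorname{osc}\ln(\rho_t/\nu_t)\le W_0(z_{\max}(t))/(m-1)$ exactly, with no ``harmless terms''.
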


\begin{proof}
First, observe that
\[
\rho_t(x) = e^{-U_t(x)}, \quad \text{where } U_t(x) = -\ln(\rho_t(x)).
\]
Moreover, by~\eqref{eq:growthV}, there exists \(R' > 0\) such that, for all \(|x| \ge R'\),
\[
\langle x, \nabla U_t(x) \rangle = \left\langle x, \frac{\beta(t)}{1 + W_0(g(t,x))} \nabla U(x) \right\rangle \ge \alpha(t)\, |x|^2,
\]
where we set \(\alpha(t) := \alpha \beta(t) M(t)\). By Lemma~\ref{lem:C-differentiable}, we note that
\[
M(t) \ge \frac{1}{1 + W_0\Big(m e^m e^{(m-1)\beta(T) \max_{[0,T]} C(t)}\Big)} := M_T > 0.
\]
Hence,
\[
0 < \alpha_T := \alpha \beta_0 M_T \le \alpha(t) \le \alpha \beta(T) =: \bar\alpha_T < +\infty.
\]

To prove this lemma, we use Theorem~1.4 of~\cite{BakryBartheCattiauxGuillin2008}.  
Let
\[
L_t = \Delta - \langle \nabla V_t, \nabla \rangle
\]
be the symmetric operator associated with \(\rho_t\).  
To bound the Poincaré constant, it suffices to construct a Lyapunov function \(W : \mathbb{R}^d \to [1,+\infty)\) such that there exist constants \(\theta, b > 0\) and \(R > 0\) satisfying
\[
L_t W(x) \le -\theta W(x) + b \mathbb{1}_{B(0,R)}(x), \quad \forall x \in \mathbb{R}^d.
\]
Under this assumption, one obtains
\[
C_P(\rho_t) \le \theta^{-1} \Big( 1 + b D R^2 e^{\mathrm{Osc}_R(U_t)} \Big),
\]
where \(D > 0\) is a universal constant.

Set
\[
R := \max\{R', \sqrt{2(d+1)/\alpha_T}\}.
\]
Consider
\[
W(x) = e^{\frac{\alpha_T}{4}|x|^2}.
\]
A direct computation gives
\[
L_t W(x) = \frac{\alpha_T}{2} \left( \frac{\alpha_T}{2}|x|^2 + d - \langle x, \nabla U_t(x) \rangle \right) W(x).
\]

For \(|x| \ge R\), we have \(\langle x, \nabla U_t(x) \rangle \ge \alpha(t) |x|^2 \ge \alpha_T |x|^2\), hence
\[
L_t W(x) \le -\frac{\alpha_T}{2} \left( \frac{\alpha_T}{2} |x|^2 - d \right) W(x) \le -\frac{\alpha_T}{2} \left( \frac{\alpha_T}{2} R^2 - d \right) W(x).
\]
We can thus set
\[
\theta := \frac{\alpha_T}{2} \left( \frac{\alpha_T}{2} R^2 - d \right) > 0.
\]

On the ball \(B(0,R)\), using \(|\nabla U(x)| \le K(1+|x|)\), one obtains
\[
L_t W(x) \le -\theta W(x) + \frac{\alpha_T}{2} \Big( \alpha_T R^2 + \bar\alpha_T R K (1+K^2) \Big) e^{\frac{\alpha_T R^2}{4}} \, \mathbb{1}_{B(0,R)}(x).
\]
Hence, one can set
\[
b := \frac{\alpha_T}{2} \Big( \alpha_T R^2 + \bar\alpha_T R K (1+K^2) \Big) e^{\frac{\alpha_T R^2}{4}} > 0.
\]

Thus, \(\rho_t\) indeed satisfies a Poincaré inequality, with
\[
C_P(\rho_t) \le \theta^{-1} \Big( 1 + b D R^2 e^{\mathrm{Osc}_{B(0,R)}(U_t)} \Big),
\]
where \(\theta, b > 0\) depend only on \(T\) and \(d\).

Finally, \(\mathrm{Osc}_{B(0,R)}(U_t) := \sup_{x \in B(0,R)} U_t(x) - \inf_{x \in B(0,R)} U_t(x)\) is finite and depends continuously on \(t\) since \(U_t\) is continuous on the compact set \([0,T] \times \overline{B(0,R)}\). This proves the second part of the lemma.
\end{proof}

We now have all the tools to prove Theorem~\ref{thm:existence vt}.

\begin{proof}
By Lemma~\ref{lem:C-differentiable}, we have
\begin{align}
  \partial_t \rho_t(x)
  &= -\beta'(t)\Big[(U(x) - C(t))
      - \mathbb{E}_{\rho_t^a}[U(x) - C(t)]\Big]
      \rho_t^a(x) Z_t^a\\
  &= -\beta'(t)\Big[U(x)
      - \mathbb{E}_{\rho_t^a}[U(x)]\Big]
      \rho_t^a(x),
\end{align}

and \(0 < Z_t^a < 1\), hence
\begin{equation}
  \frac{M(t)}{Z_t^a} \rho_t(x)\leq\rho_t^a(x) \le \frac{1}{Z_t^a} \rho_t(x).
  \label{eq:encadrement}
\end{equation}

We define, for all \(t > 0\), the linear map
\[
  \Phi : \dot{H}(\rho_t) \longrightarrow \mathbb{R}, 
  \qquad 
  \Phi(f) = \int_{\mathbb{R}^d} f(x)\,\partial_t \rho_t(x)\,dx,
\]
where 
\[
  \dot{H}(\rho_t)
  = \Big\{
      f \in L^2(\rho_t) : 
      \nabla f \in L^2(\rho_t; \mathbb{R}^d),
      \ \int_{\mathbb{R}^d} f(x)\,\rho_t(x)\,dx = 0
    \Big\}.
\]

From the obtained expression for \(\partial_t \rho_t\), we have
\[
  \Phi(f)
  = -\beta'(t) Z_t^a
    \int_{\mathbb{R}^d}
      f(x)
      \Big[
        U(x)
        - \mathbb{E}_{\rho_t^a}[U(X)]
      \Big]
      \rho_t^a(x)\,dx.
\]
By the Cauchy–Schwarz inequality,
\[
  |\Phi(f)|
  \leq |\beta'(t)| Z_t^a 
  \sqrt{\int_{\mathbb{R}^d} |f(x)|^2 \rho_t^a(x)\,dx}
  \sqrt{\mathrm{Var}_{\rho_t^a}[U(X)]}.
\]

and thus, using (\ref{eq:encadrement}),
\[
  |\Phi(f)|
  \leq |\beta'(t)| \sqrt{Z_t^a} 
  \sqrt{\mathrm{Var}_{\rho_t}[f(X)]}
  \sqrt{\mathrm{Var}_{\rho_t^a}[U(X)]}.
\]

Moreover,
\[
  \mathrm{Var}_{\rho_t^a}[U(X)]
  \le \mathbb{E}_{\rho_t^a}[U^2(X)]
  \le \frac{1}{Z_t^a}\,\mathbb{E}_{\rho_t}[U^2(X)].
\]
Now, by \eqref{eq:gradV-growth}, there exists a constant \(C_K > 0\) such that
\[
  |U(x)| \le C_K(1 + |x|^2),
\]
which implies
\[
  \mathbb{E}_{\rho_t}[U^2(X)]
  \le C_K^2 \int_{\mathbb{R}^d} (1 + |x|^2)^2 \rho_t(x)\,dx
  < +\infty,
\]
since, by \eqref{eq:V-lowerbound-strong}, \(\rho_t\) has a finite fourth moment.

We finally obtain
\[
  |\Phi(f)|
  \le |\beta'(t)|
  \sqrt{\mathrm{Var}_{\rho_t}[f(X)]}
  \sqrt{\mathbb{E}_{\rho_t}[U^2(X)]},
  \qquad \text{with }
  \mathbb{E}_{\rho_t}[U^2(X)] < +\infty.
\]

We now estimate the dual norm of \( \phi \) on \( \dot{H}^1(\mu_t) \). By definition:
\begin{align}
\|\phi\|_{\dot{H}^1(\rho_t)^*} 
&= \sup_{f \in \dot{H}^1(\rho_t)} \frac{|\phi(f)|}{\|\nabla f\|_{L^2(\rho_t)}}\\ 
&\leq |\beta'(t)| \sup_{f \in \dot{H}^1(\rho_t)} \frac{ \sqrt{\mathrm{Var}_{\rho_t}(f)} \cdot \sqrt{ \mathbb{E}_{\rho_t}[U^2(X)]} }{ \|\nabla f\|_{L^2(\rho_t)} } \notag \\
&\leq |\beta'(t)| \sqrt{C_P(\rho_t)} \cdot \sqrt{ \mathbb{E}_{\rho_t}[U^2(X)]}\\ 
&\leq |\beta'(t)| \sqrt{P(t)} \cdot \sqrt{ \mathbb{E}_{\rho_t}[U^2(X)]} < \infty,
\end{align}
where we used the Poincaré inequality and Lemma~\ref{lem:poincare}.

\medskip

Therefore, \( \phi \) is a continuous linear form on the Hilbert space \( (\dot{H}^1(\rho_t), \langle \cdot, \cdot \rangle_{\nabla,\rho_t}) \). By the Riesz representation theorem, there exists a unique \( h_t \in \dot{H}^1(\rho_t) \) such that:
\begin{equation}
\phi(f) = \langle h_t, f \rangle_{\nabla\rho_t}, \quad \forall f \in \dot{H}^1(\rho_t),
\end{equation}
and
\[
\|\phi\|_{\dot{H}^1(\rho_t)^*} = \|h_t\|_{\dot{H}^1(\rho_t)}.
\]

In particular, for all \( f \in C_c^\infty(\mathbb{R}^d) \cap \dot{H}^1(\rho_t) \), we have:
\[
\int f(x) \, \partial_t \rho_t(x)\, dx = \int \langle \nabla h_t(x), \nabla f(x) \rangle \, \rho_t(x)\, dx.
\]

Now observe that for any \( f \in C_c^\infty(\mathbb{R}^d) \), we can write \( f = (f - \mathbb{E}_{\rho_t}[f]) + \mathbb{E}_{\rho_t}[f] \), and since \( \nabla \mathbb{E}_{\rho_t}[f] = 0 \), the above equation extends to all \( f \in C_c^\infty(\mathbb{R}^d) \).

\medskip

Hence, setting \( v_t := \nabla h_t \), we obtain a solution to the continuity equation in the prescribed sense \eqref{eq:ce}.

Moreover, we have:
\[
\|v_t\|_{L^2(\rho_t)} = \|\nabla h_t\|_{L^2(\rho_t)} = \|h_t\|_{\dot{H}^1(\rho_t)} = \|\phi\|_{\dot{H}^1(\rho_t)^*} \leq |\beta'(t)| \sqrt{P(t)} \cdot \sqrt{\mathrm{Var}_{\rho_t}[U(X)]}.
\]

Since \( \sup_{t \in [0,T]} |\beta'(t)| \cdot \sqrt{P(t)} \cdot \sqrt{\mathrm{Var}_{\rho_t}[U(X)]} < \infty \), we conclude that \( t \mapsto \|v_t\|_{L^2(\rho_t)} \in L^1([0,T]) \). An application of Theorem~\ref{thm:AGS_continuity} yields that \( \rho : t \mapsto \rho_t \) is absolutely continuous in \( \mathcal{P}_2(\mathbb{R}^d) \).

\medskip

Finally, since \( v_t \in \overline{ \{ \nabla \varphi : \varphi \in C_c^\infty(\mathbb{R}^d) \} }^{L^2(\rho_t)} \), an application of Theorem~\ref{thm:minimal_velocity_field} shows that \( v_t \) is the unique velocity field of minimal norm satisfying the continuity equation. This completes the proof.
\end{proof}

\section{Well-posedness of the Controlled Swarm Gradient Dynamics}
\label{sec:controlled}

After establishing the existence of a velocity field $(v_t)_{t\in[0,T]}$ satisfying the continuity equation along the prescribed curve of measures $(\rho_t)_{t\in[0,T]}$, we are now in a position to construct a stochastic process whose time marginals coincide with this curve. 
More precisely, we study the well-posedness of the following controlled stochastic differential equation:
\begin{equation}\label{eq:c-sde}
\mathrm{d}X_t
=
\big(v_t(X_t) - \nabla U(X_t)\big)\,\mathrm{d}t
+
\sqrt{\frac{2}{\beta(t)}\,\alpha(\rho(t,X_t))}\,\mathrm{d}B_t,
\qquad X_0 \sim \rho_0,
\end{equation}
where
\[
\alpha(r) = 1 + r^{m-1},
\]
and where the density $\rho_t$ is given explicitly by
\[
\rho(t,x)
=
\left(
  \frac{1}{m}\,W_0\!\Big(m e^m e^{-(m-1)\beta(t)(U(x)-C(t))}\Big)
\right)^{\!\frac{1}{m-1}}.
\]

\medskip

It is important to emphasize that the SDE \eqref{eq:c-sde} is now \emph{linear in law}. 
Indeed, the function $\rho(t,x)$ plays the role of a prescribed coefficient and is no longer the marginal law of $X_t$.
This stems from our construction: the goal is to define a process whose law at each time $t$ is \emph{explicitly imposed} to be $\rho_t$, thereby removing the McKean--Vlasov-type nonlinearity.

The well-posedness of \eqref{eq:c-sde} essentially relies on regularity and growth assumptions on the velocity field $v_t$, for which only limited information is available.
To simplify the analysis, we adopt the same assumption as in~\cite{controlledSA}, namely that $v_t$ has at most linear growth, which is the same assumption imposed on $\nabla U$.

\begin{proposition}\label{prop:wellposed-csde}
Assume the hypotheses of Theorem~\ref{thm:existence vt} and let $(v_t)_{t\in[0,T]}$ be the velocity field constructed therein.
Assume moreover that
\begin{equation}\label{eq:vt-linear-growth}
\exists\, C>0 \quad \text{such that} \quad
|v_t(x)| \le C(1+|x|),
\qquad \forall (t,x)\in[0,T]\times\mathbb{R}^d.
\end{equation}
Then the SDE \eqref{eq:c-sde} admits at least one weak solution $(X_t)_{t\in[0,T]}$.
Moreover, for any weak solution, we have
\[
\rho^{X_t}=\rho_t(dx)\,
\qquad \forall\, t\in[0,T], \quad \text{a.s.}
\]
\end{proposition}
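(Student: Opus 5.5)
The proof has two parts: existence of a weak solution, then identification of its marginals with $(\rho_t)$ through uniqueness for a linear Fokker--Planck equation.

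\emph{Existence.} Write the drift $b(t,x):=v_t(x)-\nabla U(x)$ and the scalar diffusion $\sigma(t,x):=\sqrt{\tfrac{2}{\beta(t)}\alpha(\rho(t,x))}$. By \eqref{eq:gradV-growth} and \eqref{eq:vt-linear-growth}, $|b(t,x)|\le (K+C)(1+|x|)$. Since $W_0$ is continuous and bounded on bounded sets, and $t\mapsto \beta(t)C(t)$ is continuous (Lemma~\ref{lem:C-differentiable}), the map $\rho$ is continuous and bounded on $[0,T]\times\mathbb{R}^d$. Hence $\sigma$ is continuous and satisfies
\[
\sqrt{2/\beta(T)}\le\sigma\le \sqrt{2(1+\|\rho\|_\infty^{m-1})},
\]
so it is uniformly nondegenerate. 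I will take $v_t$ to be a measurable (Borel) representative in $(t,x)$. Existence of a weak solution then follows from Krylov's theorem for SDEs with measurable drift of linear growth and continuous, bounded, uniformly elliptic diffusion; one can also combine Girsanov with a localisation argument. Alternatively, if $v$ is assumed continuous, Skorokhod's existence theorem applies directly, with moment bounds $\mathbb{E}\sup_{t\le T}|X_t|^2<\infty$ from linear growth and Gronwall.

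\emph{Identification of the marginals.} Let $\mu_t$ be the law of $X_t$ for any weak solution. Itô's formula applied to $f\in C_c^\infty$ shows that $\mu_t$ solves, in the distributional sense, the linear Fokker--Planck equation
\[
\partial_t\mu_t=\nabla\cdot\big(\mu_t(\nabla U - v_t)\big)+\Delta\big(\tfrac{1}{\beta(t)}\alpha(\rho_t)\mu_t\big),\qquad \mu_0=\rho_0 .
\]
The curve $(\rho_t)$ solves the same equation. Two facts give this: the continuity equation \eqref{eq:ce} from Theorem~\ref{thm:existence vt}, and the stationarity identity
\[
\nabla\cdot\big(\rho_t\nabla U+\tfrac{1}{\beta(t)}\nabla(\rho_t+\rho_t^m)\big)=0 .
\]
The stationarity identity holds pointwise: the explicit Lambert form gives $\rho_t\nabla U+\tfrac1{\beta(t)}\nabla(\rho_t+\rho_t^m)=0$ (differentiate $\varphi'(\rho_t)=-\beta(t)(U-C(t))+\text{const}$). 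Since $\rho_t\alpha(\rho_t)=\rho_t+\rho_t^m$, the diffusion term is exactly $\tfrac1{\beta(t)}\Delta(\rho_t+\rho_t^m)$, and the two contributions combine to give $\partial_t\rho_t$.

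\emph{Main obstacle: uniqueness.} The key step is uniqueness of solutions to this linear FPE in the class of weakly continuous probability curves satisfying $\int_0^T\!\int(|b|+|\sigma|^2)\,d\mu_t\,dt<\infty$. The coefficients are rough: $v_t$ is only known to be $L^2(\rho_t)$ with linear growth, and $\sigma$ is merely continuous. My first attempt would be the Bogachev--Krylov--Röckner--Shaposhnikov uniqueness theory for FPKEs with nondegenerate diffusion matrix, which is continuous (in fact locally Lipschitz in $x$ where $\nabla U$ is), and with locally bounded drift of linear growth. Linear growth ensures that the Lyapunov function $1+|x|^2$ controls mass at infinity, so their result applies to probability solutions. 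As an alternative, I would use the superposition principle (Figalli/Trevisan) together with uniqueness in law of the SDE, the latter following from Stroock--Varadhan since $\sigma$ is continuous and nondegenerate and $b$ is locally bounded. Both routes yield $\mu_t=\rho_t$ for all $t\in[0,T]$, which is the claim.
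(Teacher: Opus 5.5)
Your proposal is correct. It is organised differently from the paper's proof, mainly in the existence step.

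\textbf{Existence.} The paper never uses an SDE existence theorem. It first checks that $(\rho_t)$ solves the Fokker--Planck equation and satisfies $\int_0^T\!\int \frac{\|A\|+|\langle b,x\rangle|}{(1+|x|)^2}\,d\rho_t\,dt<\infty$. It then applies the Ambrosio--Figalli--Trevisan superposition principle (in the Bogachev--R\"ockner--Shaposhnikov form) to get a martingale solution whose marginals are $\rho_t$ by construction. The uniqueness theorem (BKRS, Theorem~9.4.3, which needs local nondegeneracy, local boundedness and a local Lipschitz bound on the diffusion coefficient) is used only to pass to ``any weak solution''.

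You instead build a weak solution from general SDE theory (Krylov, or Girsanov plus localisation). That construction knows nothing about $(\rho_t)$, so even for your own solution the identity $\rho^{X_t}=\rho_t$ rests entirely on the uniqueness step. Since the statement requires uniqueness anyway, nothing is lost.

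\textbf{Uniqueness.} Your first uniqueness route is the paper's. Your second route (superposition applied to $\rho_t$, plus well-posedness of the martingale problem in the sense of Stroock--Varadhan, localised via linear growth) is slightly more economical:
\begin{itemize}
\item it needs only continuity and uniform nondegeneracy of $\tfrac{2}{\beta(t)}\alpha(\rho(t,x))$, not a local Lipschitz bound;
\item it gives uniqueness in law of the whole process, not just of the marginals;
\item in that route the superposition step already provides existence, so the Krylov argument becomes redundant.
\end{itemize}
One small point: to place the law of an \emph{arbitrary} weak solution in your uniqueness class, note explicitly that a localised Gronwall argument gives $\sup_{t\le T}\mathbb{E}|X_t|^2<\infty$. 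This uses linear growth, the bounded diffusion coefficient and $\rho_0\in\mathcal{P}_2$.

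\textbf{Form of the Fokker--Planck equation.} Your version, with diffusion term $\Delta\big(\tfrac{1}{\beta(t)}\alpha(\rho_t)\mu_t\big)$ and pointwise identity $\rho_t\nabla U+\tfrac{1}{\beta(t)}\nabla(\rho_t+\rho_t^m)=0$, is the accurate one. The paper writes the diffusion part as $\nabla\cdot(A\nabla\mu_t)$, which drops the term $\nabla\cdot(\mu_t\nabla a)$. Its stationarity relation would then read $\nabla\cdot\big(\rho_t\nabla U+\tfrac{1}{\beta(t)}\nabla(\rho_t+\rho_t^m/m)\big)=0$, which is false. Your formulation is the one under which the argument goes through as written.
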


\begin{proof}
For $(t,x)\in[0,T]\times\mathbb{R}^d$, define the drift and diffusion coefficients
\[
b(t,x) := v_t(x)-\nabla U(x),
\qquad
A(t,x) := a(t,x)\,I_d,
\quad
a(t,x) := \beta(t)^{-1}\alpha(\rho(t,x)).
\]
Consider the following Cauchy problem:
\begin{equation}\label{eq:cauchy}
\partial_t \mu_t
=
\mathcal{L}^*\mu_t
=
\nabla\cdot\!\big(A(t,\cdot)\nabla\mu_t\big)
-
\nabla\cdot\!\big(b(t,\cdot)\mu_t\big),
\qquad
\mu_0=\rho_0,
\end{equation}
where $\mathcal{L}^*$ denotes the formal adjoint of the differential operator
\[
\mathcal{L}u
=
a(t,x)\Delta u
+
\langle b(t,x),\nabla u\rangle.
\]

If a solution $(X_t)_{t\in[0,T]}$ to \eqref{eq:c-sde} exists, then the laws of its marginals must satisfy \eqref{eq:cauchy}, that is, the associated Fokker--Planck equation.
We first observe that $(\rho_t)_{t\in[0,T]}$ indeed solves \eqref{eq:cauchy}.
On the one hand, by Theorem~\ref{thm:existence vt}, the pair $(\rho_t,v_t)$ satisfies the continuity equation
\[
\partial_t\rho_t + \nabla\cdot(\rho_t v_t)=0.
\]
On the other hand, for each fixed $t$, $\rho_t$ is the stationary distribution of the time-homogeneous swarm gradient dynamics at inverse temperature $\beta = \beta(t)$, which implies
\[
\nabla\cdot(\rho_t\nabla U)+\nabla\cdot(A(t,\cdot)\nabla\rho_t)=0.
\]
Combining both identities shows that $\rho_t$ is a weak solution of \eqref{eq:cauchy}.

We now apply the Ambrosio--Figalli--Trevisan superposition principle to construct a stochastic process with prescribed marginals $(\rho_t)_{t\in[0,T]}$.
To this end, it suffices to verify that
\[
\int_0^T\!\!\int_{\mathbb{R}^d}
\frac{\|A(t,x)\| + |\langle b(t,x),x\rangle|}
     {(1+|x|)^2}\,\rho_t(\mathrm{d}x)\,\mathrm{d}t < \infty.
\]

By assumption \eqref{eq:vt-linear-growth}, there exists $C_1>0$ such that
\[
\frac{|\langle v_t(x),x\rangle|}{(1+|x|)^2}
\le
\frac{C_1(1+|x|)^2}{(1+|x|)^2}
= C_1.
\]
Similarly, by the growth assumption on $\nabla U$ \eqref{eq:gradV-growth}, there exists $C_2>0$ such that
\[
\frac{|\langle\nabla U(x),x\rangle|}{(1+|x|)^2} \le C_2.
\]
Finally,
\[
\frac{\|A(t,x)\|}{(1+|x|)^2}
\le
\beta(0)^{-1}\big(1+\rho(t,x)^{m-1}\big)
\le
\beta(0)^{-1}(1+C_T),
\]
where
\[
C_T
:=
\frac{1}{m}
W_0\!\Big(
m e^m e^{(m-1)\beta(T)\sup_{t\in[0,T]} C(t)}
\Big).
\]
Therefore,
\[
\int_0^T\!\!\int_{\mathbb{R}^d}
\frac{\|A(t,x)\| + |\langle b(t,x),x\rangle|}
     {(1+|x|)^2}\,\rho_t(\mathrm{d}x)\,\mathrm{d}t
\le
T\big(\beta(0)^{-1}(1+C_T)+C_1+C_2\big)<\infty.
\]

The Ambrosio--Figalli--Trevisan superposition principle \cite[Theorem~1.1]{BogachevRocknerShaposhnikov2021} therefore applies and yields the existence of a Borel probability measure $\mathbb{P}_{\rho_0}$ on $\Omega=C([0,T],\mathbb{R}^d)$, equipped with its natural filtration, such that $\mathbb{P}_{\rho_0}$ solves the martingale problem associated with \eqref{eq:c-sde}.
Moreover, under $\mathbb{P}_{\rho_0}$, we have
\[
\rho^{X_t}=\rho_t(x)\,\mathrm{d}x,
\qquad \forall t\in[0,T].
\]
By \cite[Theorem~32.7]{Kallenberg2021}, , this implies that \eqref{eq:c-sde} admits a weak solution with law $\mathbb{P}_{\rho_0}$.

It remains to prove uniqueness of the marginal laws.
To this end, we show that the Fokker--Planck equation \eqref{eq:cauchy}
admits a unique probability solution.
It suffices to verify that the assumptions of
\cite[Theorem~9.4.3]{BogachevKrylovRocknerShaposhnikov2022}
are satisfied in the present setting.

First, the growth assumptions \eqref{eq:gradV-growth} and
\eqref{eq:vt-linear-growth} imply that, for any $p>0$,
\[
b \in L^p_{\mathrm{loc}}\big(\mathbb{R}^d\times(0,T)\big).
\]
Moreover, by an argument similar to the one used in the application of the
Ambrosio--Figalli--Trevisan superposition principle, we have
\[
\int_0^T\!\!\int_{\mathbb{R}^d}
\left(
\frac{\|A(t,x)\|}{1+|x|^2}
+
\frac{|\langle b(t,x),x\rangle|}{1+|x|}
\right)
\,\rho_t(\mathrm{d}x)\,\mathrm{d}t
< \infty.
\]

It remains to verify that for each ball $U\subset\mathbb{R}^d$ there exist constants $\gamma,M,\Lambda>0$ such that,
for all $(x,y,t)\in U\times U\times(0,T)$,
\[
\gamma I_d \le A(t,x),
\qquad
\|A(t,x)\|\le M,
\qquad
|a(t,x)-a(t,y)|\le \Lambda |x-y|.
\]

Set $f_t(x):=\rho(t,x)^{m-1}$.
Since $U\in C^1(\mathbb{R}^d)$, we have $f_t\in C^1(\mathbb{R}^d)$.
By the finite increment inequality, for all $x,y\in U$,
\[
|a(t,x)-a(t,y)|
\le
\beta(0)^{-1}|f_t(x)-f_t(y)|
\le
\beta(0)^{-1}\sup_{z\in U}|\nabla f_t(z)|\,|x-y|.
\]

Therefore, choosing
\[
\gamma := \beta(T)^{-1},
\qquad
M := \beta(0)^{-1}(1+C_T),
\qquad
\Lambda := \beta(0)^{-1}\sup_{z\in U}|\nabla f_t(z)|,
\]
all the assumptions of
\cite[Theorem~9.4.3]{BogachevKrylovRocknerShaposhnikov2022}
are satisfied.

Hence, the Fokker--Planck equation \eqref{eq:cauchy} admits a unique probability solution.
As a consequence, any weak solution of \eqref{eq:c-sde} has time marginals given by
$\rho_t$, for all $t\in[0,T]$, almost surely.

\end{proof}

\section{Numerical implementation of the controlled process}
\label{sec:implementation}

\subsection{Implementation strategy}

We recall the controlled dynamics studied in this paper:
\begin{equation}\label{eq:controlled-sde}
\mathrm{d}X_t
=
\big(v_t(X_t) - \nabla U(X_t)\big)\,\mathrm{d}t
+
\sqrt{\frac{2}{\beta(t)}\,\alpha(\rho_t(X_t))}\,\mathrm{d}B_t,
\qquad X_0 \sim \rho_0,
\end{equation}
where
\[
\alpha(r) = 1 + r^{m-1},
\qquad
\rho_t(x) = \rho(x,t,C(t)),
\]
and the explicit parametric form of the density is
\[
\rho(x,t,C)
=
\left(
  \frac{1}{m}\,W_0\!\Big(m e^m e^{-(m-1)\beta(t)(U(x)-C)}\Big)
\right)^{\!\frac{1}{m-1}}.
\]

An important observation about the controlled process is that \emph{at every time $t$ we know its marginal $\rho_t$ explicitly} (up to the normalization constant $C(t)$). Consequently, the dynamics is no longer nonlinear in law in the sense of McKean–Vlasov: the dependence on the law can be replaced by the explicit function $\rho(\cdot,t,C(t))$. This observation has a major practical implication: unlike the uncontrolled McKean–Vlasov dynamics where an algorithmic implementation typically requires repeated, costly density estimation (e.g.\ KDE) of the marginal law, the controlled scheme can avoid explicit density estimation by instead estimating the velocity field \(v_t\) (equivalently the Monge map from \(\rho_t\) to \(\rho_{t+h}\)) so that the particle system remains close to the prescribed curve \((\rho_t)_{t\ge 0}\).

A remaining difficulty is that the density $\rho_t$ depends explicitly on the normalization constant \(C(t)\). One can simply estimate $C(t)$ from a sample of $\rho_t$ by using its characterization as the root of
\[
C \;\mapsto\; \frac{1}{N} \sum_{i=1}^N \rho(t, X_t^i, C) - 1,
\]
computed numerically using a one-dimensional root-finding method (such as Brent's method).

Concretely, let \(\{X^i_t\}_{i=1}^N\) denote the particle system approximating \(X_t\). Over a time step \(h>0\) one can

\begin{enumerate}
  \item Estimate $C(t)$ as described above.
  \item Update particles by an Euler–Maruyama step using the current estimate \(v_t\) and the current estimated density parameters:
  \[
    X^i_{t+h}
    = X^i_t + h\big(v_t(X^i_t)-\nabla U(X^i_t)\big)
    + \sqrt{\frac{h}{\beta(t)}\,\alpha(\rho(t, X^i_t, C(t)))}\,\xi^i_t,
  \]
  where \(\xi^i_t \overset{\mathrm{iid}}{\sim} \mathcal{N}(0,I_d)\).
\end{enumerate}

\medskip

It suffices to draw an initial sample from \(\rho_0\) by running the uncontrolled time-homogeneous process at temperature \(\beta(0)\) to start the discretization algorithm.

\subsection{Approximation of the Velocity Field \(v_t\)}

The objective is to obtain an approximation of the velocity field \(v_t\). 
In general, such a vector field is intractable to compute explicitly. 
Since the family \((\rho_t)_{t \geq 0}\) is absolutely continuous, Proposition~\ref{prop:charac_vt} suggests that \(v_t\) can be approximated arbitrarily well by computing the Monge map 
\(T_{\rho_t \to \rho_{t+h}}\) for some \(h > 0\), through the characterization
\[
v_t \approx \frac{T_{\rho_t \to \rho_{t+h}} - \mathrm{id}}{h}.
\]

However, because the constant \(C(t)\) evolves in parallel with the sample \(\{X_t^i\}_{i=1}^N\) from \(\rho_t\), it is coherent to assume access to \(\rho_t\) at time \(t\), but not to \(\rho_{t+h}\).  

First, we propose a strategy to obtain a coarser estimate of \(C(t+h)\) when only a sample from \(\rho_t\) is available. For this, we use the characterization of \(C'(t)\) given by Lemma~\ref{lem:C-differentiable}. Therefore, \(C(t+h)\) can be estimated by the coarse estimate
\[
C_{t+h}^v = C(t) + h\, \widehat C'(t),
\]
where \(\widehat C'(t)\) is obtained from the empirical expression of \(C'(t)\):
\[
\widehat{C}'(t)
\;=\;
\frac{\beta'(t)}{\beta(t)}\,
\frac{\sum_{i=1}^N \big(U(X^i_t)-C(t)\big)\,a(t,X^i_t,C(t))}
     {\sum_{i=1}^N a(t,X^i_t,C(t))}.
\]

Then, to approximate \(v_t\) we use the strategy proposed in~\cite{controlledSA}, which aims to control the simulated annealing dynamics so that it follows a prescribed Gibbs measure path, is to transform the continuous optimal transport problem into a discrete one. 
To do so, they use an importance sampling technique to reweight the empirical measure of the sample at time \(t\), initially distributed according to \(\mu_t\), so as to obtain a discrete approximation of \(\mu_{t+h}\).

\medskip

We can apply this approach to our setting. 
The only difference with the original Gibbs framework, where one has access to the unnormalized density, is that in our case, we only have access to the density \(\rho_t\) up to the approximation \(C(t)\) of the true parameter \(C_t\). 
Therefore, we approximate \(\rho_t\) by the normalized density
\[
\rho_t(x) \approx \frac{\rho(x, t, C(t))}{Z_t},
\]
where \(Z_t\) is the normalizing constant, and we use the same self-normalized importance sampling strategy for the unnormalized densities 
\(\rho(x, t+h, C(t))\) and \(\rho(x, t+h, C(t+h))\).

\medskip

An additional practical consideration is that, whereas the Euler–Maruyama discretization requires a small time step \(\Delta t\) to avoid deviating too far from the true process, estimating 
\(T_{\rho_t \to \rho_{t+h}}\) at such a frequency would be computationally prohibitive. 
We therefore introduce two time scales:
a fine discretization step \(\Delta t\) for the particle dynamics, and a coarser update interval \(h = k \Delta t\) for the estimation of \(v_t\).
At each velocity update, we compute a rougher estimate of \(C(t+h)\), denoted \(C_{t+h}^v\), which is used only for the velocity field estimation, 
while a smoother update of \(C(t)\) with step \(\Delta t\) is used within the SDE discretization.

\medskip

In practice, the reweighted empirical measure at time \(t+h\) is defined as
\[
\hat\rho_{t+h} := \sum_{i=1}^n \omega_i \, \delta_{x_i}, 
\qquad 
\tilde{\omega}_i := 
\frac{\rho(X_t^i, t+h, C(t+h))}{\rho(X_t^i, t, C(t))},
\qquad 
\omega_i = \frac{\tilde{\omega}_i}{\sum_j \tilde{\omega}_j}.
\]

Once the discrete measures 
\(\hat\rho_t = \frac{1}{n}\sum_i \delta_{X_t^i}\) and 
\(\hat\rho_{t+h} = \sum_i \omega_i \delta_{X_t^i}\)
are defined, we estimate the transport map \(T_{\rho_t \to \rho_{t+h}}\)
by solving the discrete optimal transport problem between these two empirical measures.
Let \(C_{ij} = \|X_t^i - X_t^j\|^2\) denote the cost matrix, 
and let \(G^*\) be the optimal coupling solving
\[
\min_{G \ge 0} \ \langle C, G \rangle_{\mathrm{Fr}}
\quad \text{s.t. } \quad G\mathbf{1} = \mathbf{1}, \quad G^\top \mathbf{1} = n\mathbf{w}.
\]
The \emph{barycentric projection} provides an approximation of the Monge map by associating to each source particle \(X_t^i\) the barycenter of its transported mass:
\[
T(X_t^i) := \sum_{j=1}^n G^*_{ij} X_t^j.
\]
The corresponding velocity estimator then reads
\[
V_t^i = \frac{T(X_t^i) - X_t^i}{h}.
\]

\subsection{Discretization Scheme for the Controlled Swarm Gradient Process}

We can now propose an implementable algorithmic scheme to approximate the controlled process~\eqref{eq:controlled-sde}.

\begin{algorithm}[H]
\caption{Controlled Swarm Gradient Dynamics}
\begin{algorithmic}[1]
\State \textbf{Input:} Objective $U$, cooling schedule $\beta$, initial states $\mathbf{X}_0 = (X^i_0)_{i=1}^n$, initial normalization constant $C_0$, time step $\Delta t$, update interval $h = k \Delta t$, final time $T$.
\State Set $t = 0$.
\While{$t < T$}

  \Statex \textbf{1. Estimate the velocity field:}
   \State Compute the coarse update for the velocity: $C_{t+h}^v = C_t + h \, \hat{C}'(t)$.
  \State Compute the weights $\tilde{w}_i = \dfrac{\rho(X_t^i, t+h, C_{t+h}^v)}{\rho(X_t^i, t, C_t)}$.
  \State Normalize: $w_i = \tilde{w}_i / \sum_j \tilde{w}_j$.
  \State Define the cost matrix $C_{ij} = |X_t^i - X_t^j|^2$.
  \State Solve the optimal transport problem:
  \[
  G^* := \arg\min_{G \ge 0} \langle C, G \rangle_{\mathrm{Fr}}
  \quad \text{s.t. } G \mathbf{1} = \mathbf{1}, \quad G^\top \mathbf{1} = n \mathbf{w}.
  \]
  \State Compute the barycentric projection $T(X_t^i) = \sum_j G^*_{ij} X_t^j$.
  \State Set $V_t^i = \dfrac{T(X_t^i) - X_t^i}{h}$ for $i = 1, \dots, n$.

  \Statex \textbf{2. Integrate the SDE:}
  \For{$\ell = 0$ to $k-1$}
    \State Sample $\xi_\ell^i \sim \mathcal{N}(0, I_d)$ independently.
    \State Update:
    \[
    X^i_{t + (\ell+1)\Delta t} = 
    X^i_{t + \ell \Delta t} 
    + \Delta t \left(V_t^i - \nabla U(X^i_{t+\ell\Delta t})\right)
    + \sqrt{\beta^{-1}(t + \ell\Delta t)\,
      \alpha(\rho(X^i_{t + \ell \Delta t}, t+\ell\Delta t, C_{t+\ell\Delta t}))}\,
      \Delta t^{1/2} \xi_\ell^i.
    \]
    \State Update the normalization constant:
    \[
        C_{t + (\ell+1)\Delta t}
        = \arg\!\left\{ C : \frac1n \sum_{i=1}^n \rho\big(t + (\ell+1)\Delta t, X^i_{t+(\ell+1)\Delta t}, C\big) = 1 \right\}.
    \]
  \EndFor
  \State Update $t \gets t + h$.
\EndWhile
\end{algorithmic}
\end{algorithm}

\begin{remark}[Initialization trick]
In principle, this type of controlled process is initialized by drawing the
initial sample $X_0$ from the invariant distribution of the time-homogeneous,
uncontrolled dynamics corresponding to a fixed initial inverse temperature
$\beta(0)$. In practice, this preliminary sampling phase can be computationally
costly.

A simple initialization trick allows one to bypass this step.
When working with potentials $U$ having compact support, one may start from an
initial distribution that is easy to sample and supported on the same compact
set, typically the uniform distribution, and choose a cooling schedule such
that $\beta(0)=0$.
In this case, the first velocity field can be obtained by estimating a single
optimal transport map between the uniform distribution and the target density
$\rho^{X_h}$ at the first time step.

This strategy can also be employed for coercive potentials defined on
$\mathbb{R}^d$.
Indeed, choosing a sufficiently large compact set, particles are very unlikely
to leave this region due to coercivity, so that initializing from a compactly
supported distribution does not significantly affect the performance of the
algorithm.

This initialization trick is particularly effective for controlled simulated
annealing.
First, the uniform distribution is a natural initial law, since the Gibbs
measure $\mu_\beta$ converges toward the uniform distribution as
$\beta \to 0$.
Second, in this setting, the estimation of the control field $v_t$ via
importance sampling is independent of the normalization constant, and depends
only on the density of $\mu_h$.

By contrast, this strategy is less effective for controlled swarm gradient
dynamics.
Indeed, the estimation of the velocity field $v_t$ in this case crucially
depends on an accurate estimation of the normalization constant.
An inaccurate initialization of $C_h$ may therefore propagate through time and
significantly deteriorate the numerical performance of the controlled swarm
algorithm.
\end{remark}

\section{Numerical Results}
\label{sec:numerical}

In this section, we present the numerical experiments carried out to evaluate the performance of controlled swarm gradient dynamics (CSG) algorithms compared with that of controlled simulated annealing (CSA).

\subsection{1D Double-Well Function}

We first tested the controlled swarm gradient on the one-dimensional double-well function [Figure~\ref{fig:double_well}] introduced in~\cite{GaoXuZhou2022} and used it to test uncontrolled swarm gradient dynamics in~\cite{HuangMalik2025} . The potential is defined as
\begin{equation}
U(x) = 
\begin{cases}
-12 x - 52, & x \le -6, \\
2(x+3)^2 + 2, & -6 < x < -2, \\
8 - x^2, & -2 \le x \le 2, \\
(x-4)^2, & 2 < x \le 6, \\
4 x - 20, & x > 6.
\end{cases}
\end{equation}
This function has a global minimum at \(x^* = 4\) and a local minimum at \(x^* = -3\), separated by a significant energy barrier. 

\begin{figure}[h!]
\centering
\includegraphics[width=0.6\textwidth]{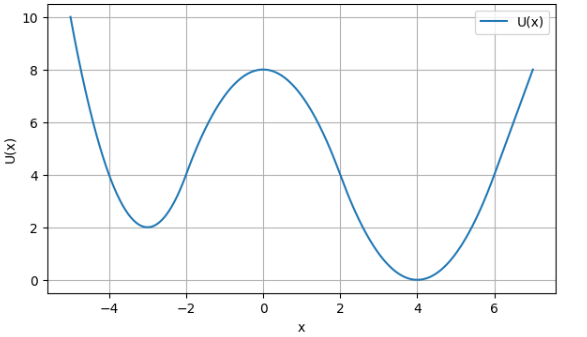}
\caption{1D double-well potential \(U(x)\).}
\label{fig:double_well}
\end{figure}

We implemented both the controlled swarm gradient and the controlled simulated annealing, defined by
\begin{equation}
\mathrm{d}X_t = \big(v_t(X_t) - \nabla U(X_t)\big)\,\mathrm{d}t + \sqrt{\frac{2}{\beta(t)}}\,\mathrm{d}B_t, \qquad X_0 \sim \mu_0
\end{equation}
where \(v_t\) satisfies the continuity equation with the target Gibbs measure \((\mu_t)_{t \ge 0}\), \(\mu_t(x) \propto e^{-\beta(t) U(x)}\).

Each process was initialized by sampling from \(\rho_0\) (or \(\mu_0\)) using an uncontrolled swarm gradient with empirical marginal estimated via Gaussian KDE (or a Langevin diffusion at fixed \(\beta(0)\)). The temperature schedule was chosen quadratic:
\begin{equation}
\beta(t) = 0.25 + 25\, t^2.
\end{equation}

\subsubsection{Particle Heatmaps}

We present four heatmaps of particle distributions over time [Figure~\ref{fig:heatmaps_doublewell}], aggregated from 100 runs with 100 initial particles sampled from \(\rho_0\) (or \(\mu_0\)) each. The time interval is \(t \in [0,1]\) with \(500\) iterations (\(\Delta t = 0.002\)), and \(v_t\) is estimated every 20 iterations (\(h = 0.04\)) except for the last experiment where \(h = 0.02\). 

\begin{figure}[h!]
\centering
\includegraphics[width=0.48\textwidth]{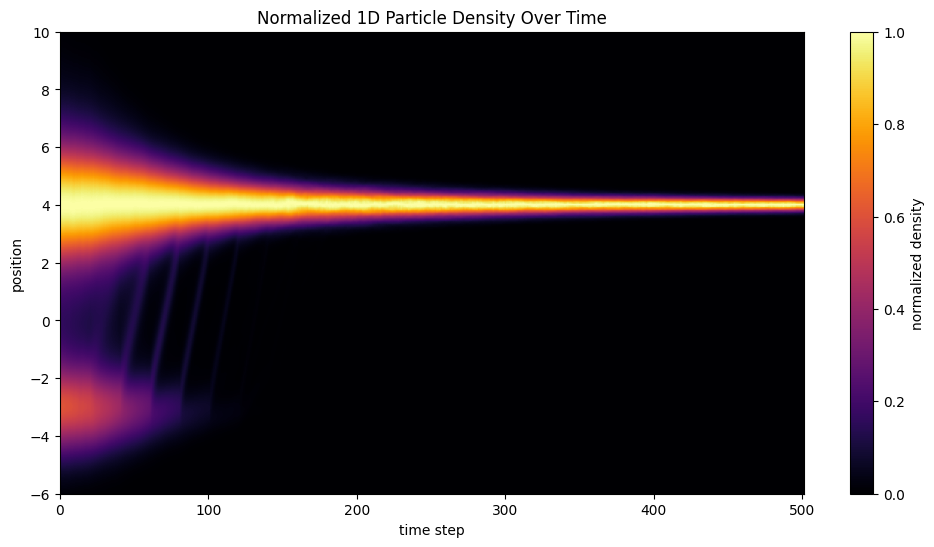}
\includegraphics[width=0.48\textwidth]{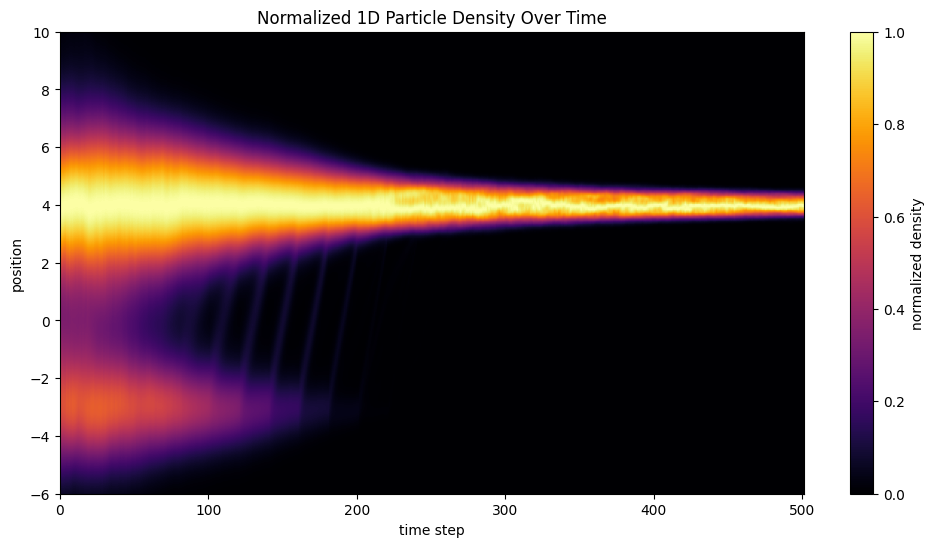}
\includegraphics[width=0.48\textwidth]{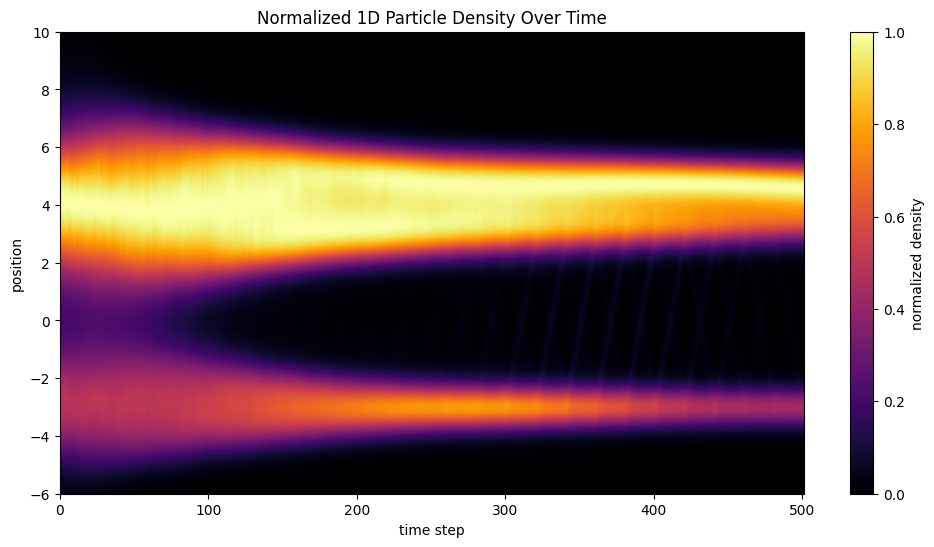}
\includegraphics[width=0.48\textwidth]{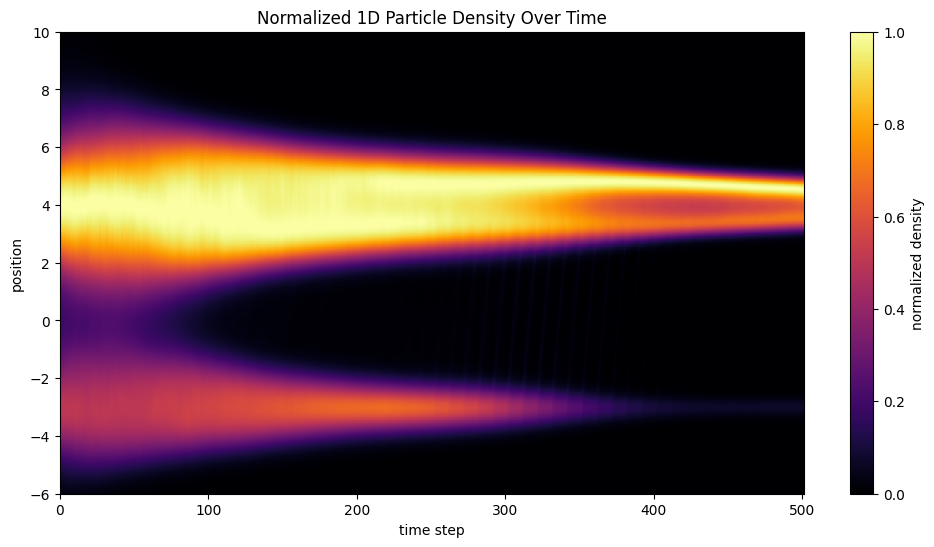}
\caption{Particle heatmaps for CSA (top-left) and Controlled Swarm Gradient (top-right for $m=2$, bottom for $m=6$).  
All runs use 100 experiments with 100 particles each. Time step $\Delta t = 0.002$, velocity estimated every 20 iterations ($h = 0.04$), except for the bottom-right panel where $h = 0.02$.}
\label{fig:heatmaps_doublewell}
\end{figure}

We observe that all methods converge to the global minimum. However, CSA achieves the best overall convergence. One reason for this could be that the controlled swarm is subject to conflicting forces: one guiding the particles toward the minimum and the other encouraging exploration for particles clustered in a local minimum without distinction between local and global minima. Furthermore, the performance of the controlled swarm is more sensitive to the choice of parameters, as illustrated by the case \(m=6\): estimating \(v_t\) every 10 steps improves convergence compared to estimating it every 20 steps. This is probably due to the fact that the coarser estimate \(C^v_{t+h}\) degrades performance for large values of \(m\) or fast cooling schedules (the ratio \(\frac{\beta'(t)} {\beta(t)}\) can explode).

\subsubsection{Controlled Simulated Annealing as the Limit of Controlled Swarm Gradient Dynamics}

An interesting observation from the numerical experiments above is that the behaviour of the controlled swarm scheme appears to converge to that of CSA as \(m \to 1\). Equivalently, one may view controlled swarm dynamics as a variant of CSA with increased variability around local minima, with particles remaining longer trapped when \(m\) is larger. Heuristically, CSA can be interpreted as the limit of the controlled swarm scheme when \(m\) approaches \(1\).

First, the controlled swarm noise (after dividing the function \(\alpha\) by \(2\)) tends to the CSA noise, since for every \(r>0\),
\[
\alpha(r) = 1 + r^{m-1} \;\xrightarrow[m\to1]{}\; 2.
\]
Similarly, the unnormalized marginal density (for a fixed constant \(C\) not depending on \(m\)) becomes proportional to the Gibbs measure \(\mu_t\) in the limit \(m\to1\). As shown in the following statement.

\begin{proposition}
\label{prop:pointwise-limit-rho}
Let $\beta>0$, let $U:\mathbb{R}^d\to\mathbb{R}$ and fix $C\in\mathbb{R}$. For $m>1$ define, for every $x\in\mathbb{R}^d$,
\[
  \rho(x,m,C)
  :=
  \left(
    \frac{1}{m}\,W_0\!\Big(m e^m e^{-2(m-1)\beta\big(U(x)-C\big)}\Big)
  \right)^{\!\frac{1}{m-1}}.
\]
Then for each fixed \(x\in\mathbb{R}^d\),
\[
  \rho(x,m,C) \xrightarrow[m\to1^+]{} e^{-\beta\big(U(x)-C\big)}.
\]
In particular, for fixed \(C\) the pointwise limit is proportional to the Gibbs density \(e^{-\beta U(x)}\).
\end{proposition}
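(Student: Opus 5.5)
The plan is to reduce the statement to a first-order expansion of the Lambert function near the point $z=e$, where $W_0(e)=1$. Fix $x$ and set $s:=\beta\big(U(x)-C\big)\in\mathbb{R}$ and $\varepsilon:=m-1>0$. Define $w(\varepsilon):=W_0\big(z(\varepsilon)\big)$ with $z(\varepsilon)=(1+\varepsilon)e^{1+\varepsilon}e^{-2\varepsilon s}$. Then
\[
\ln \rho(x,m,C)=\frac{\ln w(\varepsilon)-\ln(1+\varepsilon)}{\varepsilon}.
\]
The claim therefore reduces to showing that this quotient tends to $-s$ as $\varepsilon\to0^+$. Once that is shown, continuity of the exponential gives the limit $e^{-\beta(U(x)-C)}=e^{\beta C}e^{-\beta U(x)}$, which is proportional to the Gibbs density.

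\textbf{Step 1: a smooth characterization of $w$.} Instead of handling $W_0$ directly, I will use the logarithmic form of its defining relation $we^w=z$, valid for $z>0$ and $w>0$:
\[
\ln w(\varepsilon)+w(\varepsilon)=\ln(1+\varepsilon)+1+\varepsilon-2\varepsilon s .
\]
The map $w\mapsto \ln w+w$ is smooth on $(0,\infty)$ with derivative $1/w+1>0$. Hence, by the implicit function theorem (or simply because $W_0$ is smooth on $(0,\infty)$ and $z$ is smooth in $\varepsilon$), $w$ extends to a $\mathcal{C}^1$ function on a neighbourhood of $\varepsilon=0$. At $\varepsilon=0$ we have $z(0)=e$, so $w(0)=W_0(e)=1$.

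\textbf{Step 2: the derivative of $w$ at $0$.} Differentiating the identity of Step 1 gives
\[
w'(\varepsilon)\Big(\tfrac{1}{w(\varepsilon)}+1\Big)=\tfrac{1}{1+\varepsilon}+1-2s .
\]
Evaluating at $\varepsilon=0$, where $w(0)=1$, yields $2w'(0)=2-2s$, that is, $w'(0)=1-s$.

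\textbf{Step 3: the limit of the quotient.} Both the numerator $\ln w(\varepsilon)-\ln(1+\varepsilon)$ and the denominator $\varepsilon$ vanish at $\varepsilon=0$, and the numerator is $\mathcal{C}^1$ there. The quotient therefore converges to the derivative of the numerator at $0$, which is
\[
\frac{w'(0)}{w(0)}-1=(1-s)-1=-s .
\]
This is the required limit.

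The only step needing care is Step 1: one must justify the $\mathcal{C}^1$ extension of $w$ up to $\varepsilon=0$, so that the quotient limit is legitimate. This is routine because $z(0)=e$ lies inside $(0,\infty)$, where $W_0$ is analytic. I therefore expect no genuine obstacle. The factor $2$ in the exponent of the statement is exactly what makes $w'(0)=1-s$ combine with the $-1$ coming from $\ln(1+\varepsilon)$ to produce $-s$; with a factor $1$ instead, the same computation would give the limit $e^{(1-s)/2-1}$.
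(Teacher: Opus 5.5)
Your proof is correct and follows the paper's argument. Both expand $W_0$ at $z=e$ to get $W_0(z(\varepsilon))=1+\varepsilon(1-s)+o(\varepsilon)$ and then pass to the limit in $\ln\rho$; the only cosmetic difference is that you get $w'(0)$ from $\ln w+w=\ln z$ and read the limit off as a derivative, while the paper uses $W_0'(e)=1/(2e)$ and $o(\varepsilon)$ bookkeeping.

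Your closing aside is miscomputed, though. With factor $1$ one gets $2w'(0)=2-s$, so the limit is $e^{-s/2}$, not $e^{(1-s)/2-1}$; this is already visible at $s=0$, where $w(\varepsilon)=1+\varepsilon$ exactly and $\rho\equiv 1$.
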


\begin{proof}
Fix \(x\in\mathbb{R}^d\) and \(C\in\mathbb{R}\). Set
\[
  \Delta := \beta\big(U(x)-C\big),
  \qquad m = 1+\varepsilon, \quad \varepsilon>0.
\]
Define
\[
  g(\varepsilon) := (1+\varepsilon)e^{1+\varepsilon}e^{-2\varepsilon\Delta}.
\]
Expanding the exponential factors at first order in \(\varepsilon\) gives
\begin{align*}
  g(\varepsilon)
  &= e(1+\varepsilon)\,e^{\varepsilon}\,e^{-2\varepsilon\Delta}
  = e(1+\varepsilon)\big(1+\varepsilon+o(\varepsilon)\big)\big(1-2\varepsilon\Delta+o(\varepsilon)\big)\\
  &= e(1+2\varepsilon + o(\varepsilon))(1-2\varepsilon\Delta + o(\varepsilon)) \\
  &= e\big[1 + 2\varepsilon(1-\Delta)\big] + o(\varepsilon).
\end{align*}

Recall that \(W_0(e)=1\) and that the derivative of \(W_0\) at \(x=e\) is
\[
  W_0'(e)=\frac{1}{e+e^{W_0(e)}}=\frac{1}{2e}.
\]
Thus a first-order Taylor expansion of \(W_0\) around \(e\) yields
\[
  W_0\big(g(\varepsilon)\big)
  = 1 + \frac{1}{2e}\big(g(\varepsilon)-e\big) + o\big(g(\varepsilon)-e\big)
  = 1 + \frac{1}{2e}\big(e\cdot 2\varepsilon(1-\Delta)\big) + o(\varepsilon)
  = 1 + \varepsilon(1-\Delta) + o(\varepsilon).
\]
Now compute
\[
  \frac{1}{m}W_0\big(g(\varepsilon)\big)
  = \frac{1}{1+\varepsilon}\big(1 + \varepsilon(1-\Delta) + o(\varepsilon)\big).
\]
Expanding \((1+\varepsilon)^{-1}\) and multiplying terms gives
\[
  \frac{1}{m}W_0\big(g(\varepsilon)\big)
  = \big(1 - \varepsilon + o(\varepsilon)\big)\big(1 + \varepsilon(1-\Delta) + o(\varepsilon)\big)
  = 1 - \varepsilon\Delta + o(\varepsilon).
\]
Therefore
\[
  \rho(x,1+\varepsilon,C)
  = \big(1 - \varepsilon\Delta + o(\varepsilon)\big)^{1/\varepsilon}.
\]
Taking logarithms and using the expansion \(\ln(1+z)=z+o(z)\) as \(z\to0\), we obtain
\[
  \ln \rho(x,1+\varepsilon,C)
  = \frac{1}{\varepsilon}\ln\big(1 - \varepsilon\Delta + o(\varepsilon)\big)
  = \frac{1}{\varepsilon}\big(-\varepsilon\Delta + o(\varepsilon)\big)
  = -\Delta + o(1).
\]
Exponentiating yields
\[
  \rho(x,1+\varepsilon,C) = e^{-\Delta + o(1)} = e^{-\beta\big(U(x)-C\big)} e^{o(1)}.
\]
Letting \(\varepsilon\to0^+\) (i.e. \(m\downarrow1\)) gives the claim.
\end{proof}

\bigskip

However, verifying that this heuristic remains valid for the true marginals, when the normalization constant depends on \(m\), is more delicate.

\subsubsection{Performance with Small Number of Particles}

We also tested the algorithm with a small number of particles using the same parameters as in previous experiments. For \(m=6\), only the configuration with a velocity estimate every 10 steps was used. A single sample of \(\rho_0\) (and \(\mu_0\)) with 1000 particles was drawn, and \(C(0)\) was estimated from this sample. Then, 1000 experiments were performed, each selecting 5 particles randomly and executing a CSG (or CSA) on these 5 particles. The median of the minima of the 5 particles over time was analyzed [Figure~\ref{fig:db-minU}]. Both methods work well, with CSA performing slightly better than CSG. For \(m=6\), high variability prevents complete convergence to the global minimum.

\begin{figure}[t]
\centering
\includegraphics[width=0.85\linewidth]{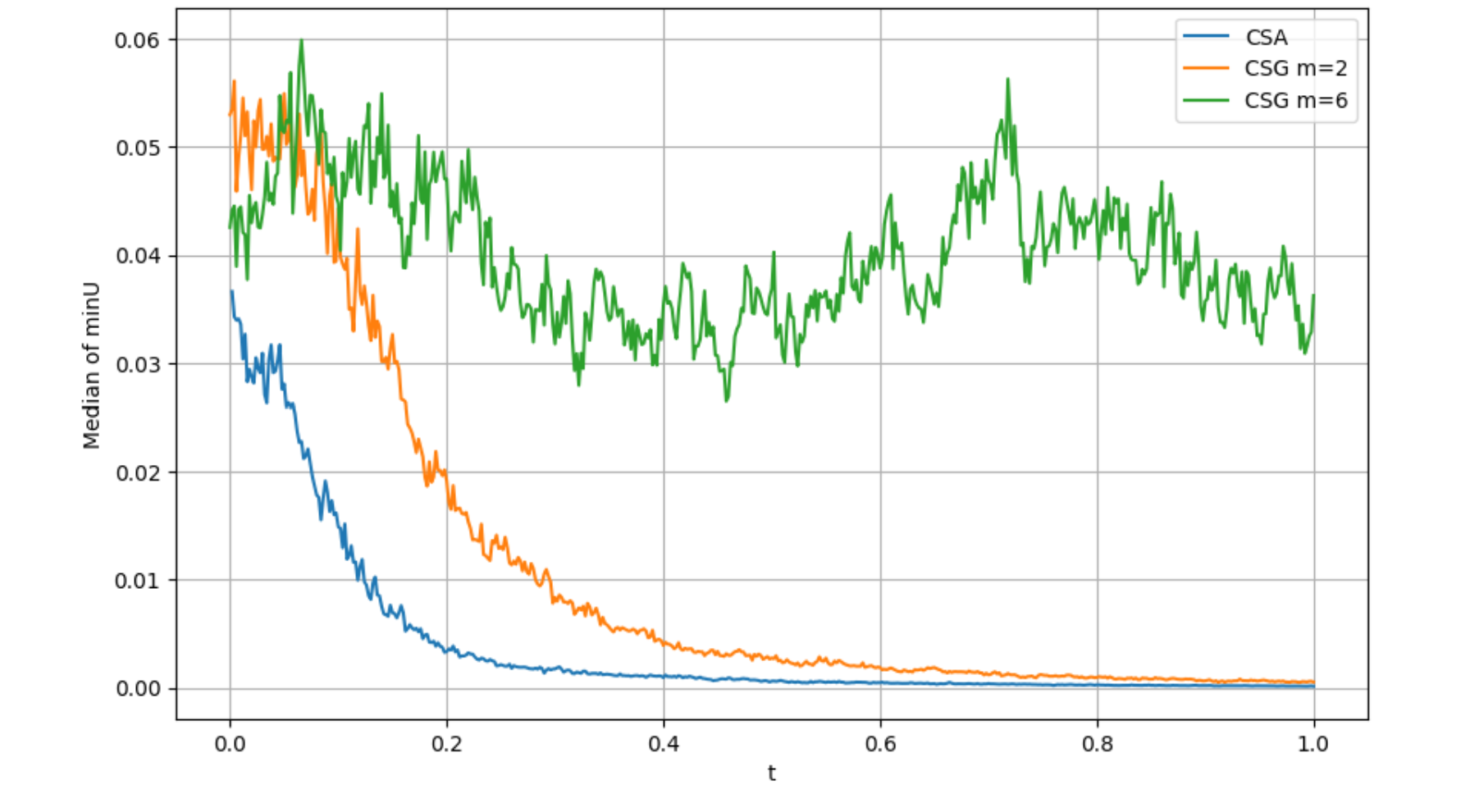}
\caption{
Evolution of the median of the quantity 
\(\displaystyle \min_{1\le i\le 5} \min_t X_t^{(i)}\)
for CSA and CSG with parameters \(m=2\) and \(m=6\).
Each curve represents the median over 1000 experiments using a quadratic cooling schedule :
\(\beta(t)=0.25 + 25\,t^{2}\).
}
\label{fig:db-minU}
\end{figure}

\subsection{Two-Dimensional Six-Hump Camel Function}

We next assessed the robustness of the methods with respect to the initial sampling on the two-dimensional Six-Hump Camel function [Figure~\ref{fig:six_hump}], defined by
\[
U(x_1,x_2)
=
(4 - 2.1 x_1^2 + \tfrac{x_1^4}{3}) x_1^2
+ x_1 x_2
+ (4 x_2^2 - 4) x_2^2
+ 1.0316.
\]
This function admits two symmetric global minima at $(0.0898,-0.7126)$ and $(-0.0898,0.7126)$ for which the value of the function is 0, along with four additional local minima.

\begin{figure}[h!]
\centering
\includegraphics[width=0.7\textwidth]{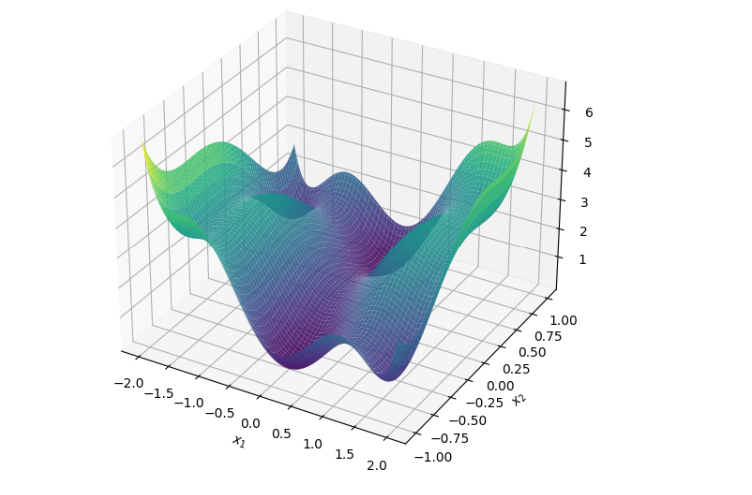}
\caption{2D Six-Hump Camel Function.}
\label{fig:six_hump}
\end{figure}

\medskip

To test sensitivity to the initial configuration, we reproduced the experimental protocol from the last experiment, with the difference that the initial sample used is tightly concentrated around the two local wells.  
A reference sample of $1000$ particles was first drawn from a Gaussian mixture supported near local minima: one Gaussian component was centered near $(2,-1)$ and the second near $(-2,1)$, both with a covariance matrix of \(0.005 I_2\). The two components were sampled with equal probability. 

Then, as in the one-dimensional small-sample experiment, we performed $1000$ independent runs.  
Each run begins by selecting $5$ particles uniformly at random from the reference set, and evolving them under either CSA or CSG dynamics.  
We used a linear cooling schedule
\[
\beta(t) = 0.25 + 25\, t,
\]
and identical discretization parameters as before: time step $\Delta t = 0.002$, and velocity estimation every $h=0.04$ units of time (for CSG with $m=2$ and $m=6$).

\begin{figure}[h!]
\centering
\includegraphics[width=0.8\textwidth]{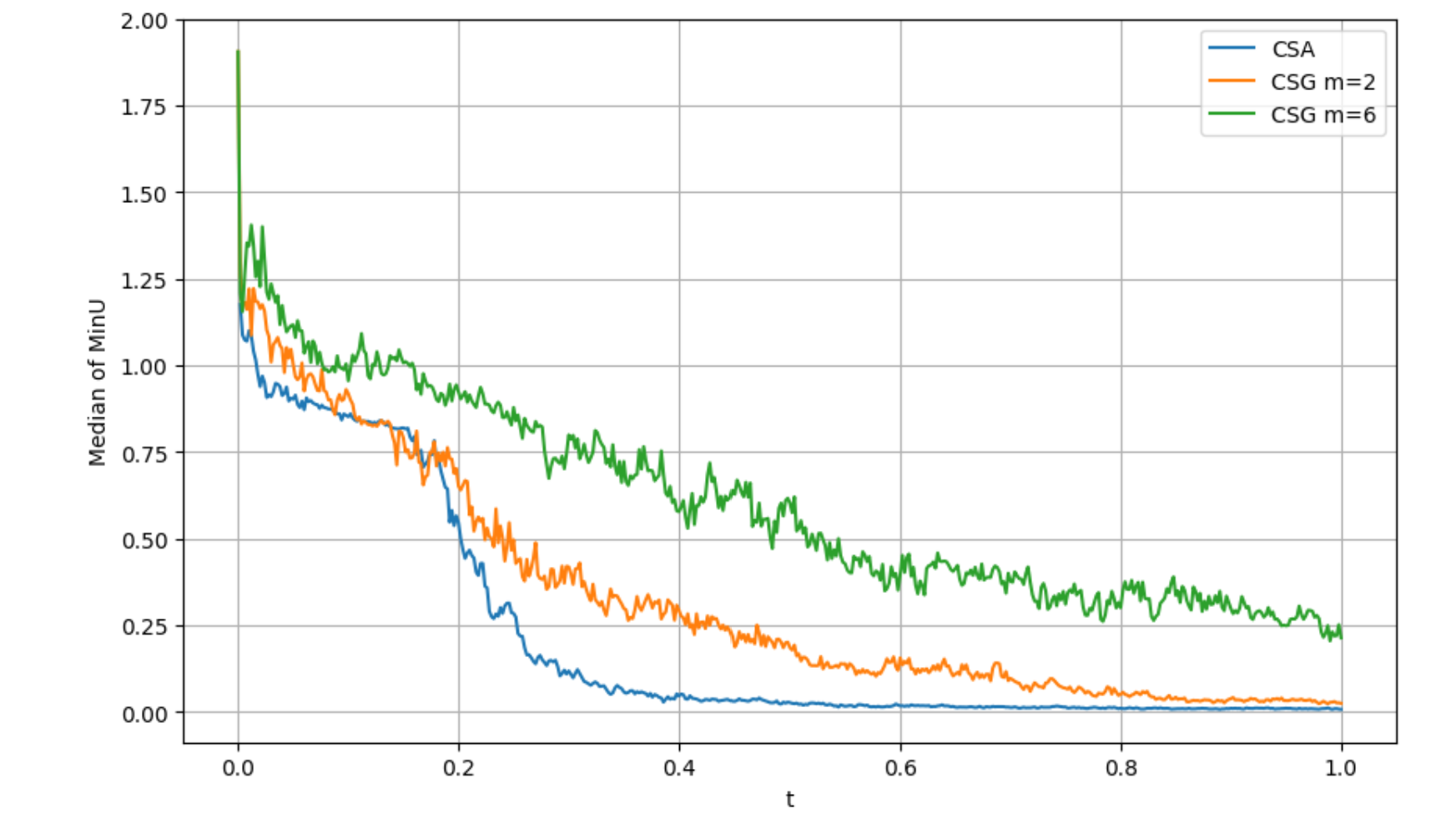}
\caption{
Median of $\min_{1\le i\le 5} U(X_t^i)$ over $1000$ experiments,  
for CSA and CSG ($m=2$ and $m=6$).  
Parameters: $\Delta t = 0.002$, velocity refresh rate $h=0.04$, linear cooling schedule $\beta(t)=0.25+25t$.}
\label{fig:camel-minU}
\end{figure}

\begin{figure}[h!]
\centering
\includegraphics[width=0.8\textwidth]{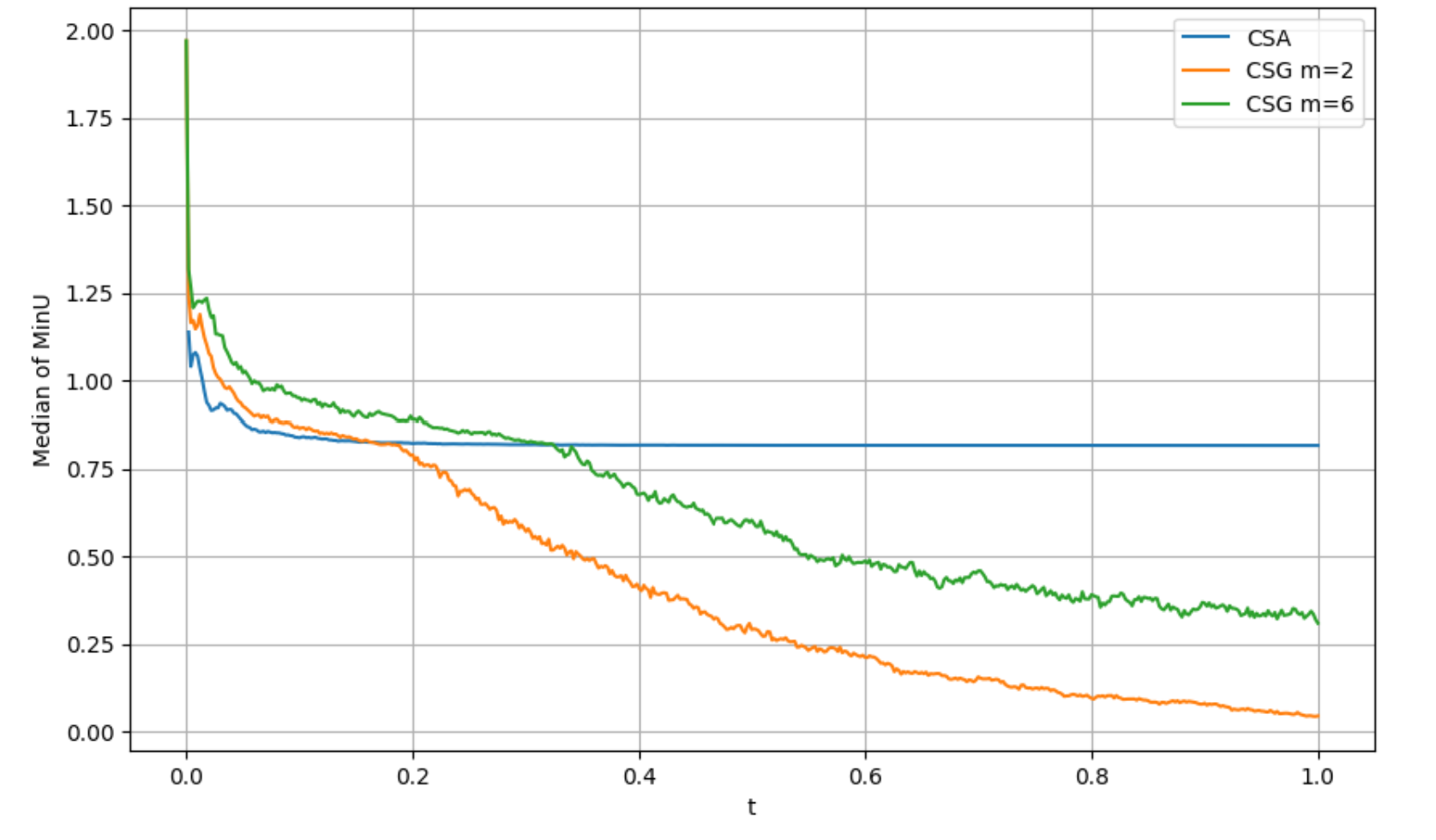}
\caption{
Median of $\min_{1\le i\le 5} U(X_t^i)$ over $1000$ experiments,  
for CSA and CSG ($m=2$ and $m=6$).  
Parameters: $\Delta t = 0.002$, velocity refresh rate $h=0.04$, linear cooling schedule $\beta(t)=0.25+50t$.}
\label{fig:camel-minU2}
\end{figure}

The median of the minima among the 5 selected particles over time was analyzed [Figure~\ref{fig:camel-minU}]. Both methods perform well, particularly CSA and CSG with $m=2$, which manage to reach one of the global minima, with CSA exhibiting slightly better efficiency.  

Interestingly, for this experiment, the CSG demonstrates greater robustness to faster cooling schedules [Figure~\ref{fig:camel-minU2}]. When the cooling rate $\beta(t)$ is doubled, the swarm is still able to escape local minima, whereas CSA fails to do so.

\end{document}